\documentclass[twoside,11pt]{article}

\usepackage[preprint, nohyperref]{jmlr2e}

\usepackage[utf8]{inputenc} % allow utf-8 input
\usepackage[T1]{fontenc}    % use 8-bit T1 fonts
\usepackage{booktabs}       % professional-quality tables
\usepackage{amsfonts}       % blackboard math symbols
\usepackage{nicefrac}       % compact symbols for 1/2, etc.
\usepackage{microtype}      % microtypography
\usepackage{xcolor}         % colors

% Other useful packages
\usepackage{mathtools}
\usepackage{xspace}
\usepackage{enumitem}
\usepackage{nicefrac}

% Typesetting algorithms
\usepackage{algorithm}
\usepackage[noend]{algorithmic}

\usepackage[colorlinks=true,citecolor=blue,urlcolor=black]{hyperref} 

% Clever references
\usepackage[capitalise]{cleveref}

% Custom theorem environments
\newtheorem{assumption}{Assumption}
\newtheorem{question}{Question}

% New Commands

\newcommand{\R}{\mathbb{R}}

\newcommand{\fedavg}{\textsc{FedAvg}\xspace}
\newcommand{\fedavgm}{\textsc{FedAvgM}\xspace}
\newcommand{\fedadagrad}{\textsc{FedAdagrad}\xspace}
\newcommand{\serveropt}{\textsc{ServerOpt}\xspace}

% Math Operators

% Paired notation
\DeclarePairedDelimiterX{\abs}[1]{\lvert}{\rvert}{#1}
\DeclarePairedDelimiterX{\norm}[1]{\lVert}{\rVert}{#1}

% Short headings should be running head and authors last names

\ShortHeadings{Iterated Vector Fields, Conservatism, and Federated Learning}{Charles and Rush}
\firstpageno{1}

\begin{document}

\title{Iterated Vector Fields and Conservatism, with Applications to Federated Learning}

% Two authors with the same address
\author{\name{Zachary Charles} \email{zachcharles@google.com} \\ \addr Google Research \AND
 \name{Keith Rush} \email{krush@google.com}\\
 \addr Google Research}
 
\editor{}

\maketitle

\begin{abstract}
We study whether iterated vector fields (vector fields composed with themselves) are conservative. We give explicit examples of vector fields for which this self-composition preserves conservatism. Notably, this includes gradient vector fields of loss functions associated with some generalized linear models. As we show, characterizing the set of vector fields satisfying this condition leads to non-trivial geometric questions. In the context of federated learning, we show that when clients have loss functions whose gradients satisfy this condition, federated averaging is equivalent to gradient descent on a surrogate loss function. We leverage this to derive novel convergence results for federated learning. By contrast, we demonstrate that when the client losses violate this property, federated averaging can yield behavior which is fundamentally distinct from centralized optimization. Finally, we discuss theoretical and practical questions our analytical framework raises for federated learning.
\end{abstract}

\section{Introduction}

In this work, we consider vector fields of the form $V: \R^n \to \R^n$. Recall that $V$ is conservative if there is some differentiable function $f: \R^n \to \R$ such that $V = \nabla f$. We are interested in whether \emph{iterated} vector fields (vector fields of the form $V \circ V \circ \dots \circ V$) are conservative. While mathematically rich in its own right, this question has important connections to dynamical systems and optimization. As we will show, conservative iterated vector fields are particularly important for understanding optimization algorithms for federated learning.

\paragraph{Notation.} Let $\mathcal{V}(\R^n, \R^m)$ be the collection of functions from $\R^n$ to $\R^m$. We let $\mathcal{C}^k(\R^n, \R^m)$ denote the subset of $\mathcal{V}(\R^n, \R^m)$ of functions whose coordinate functions are all of class $\mathcal{C}^k$. If $m = n$, we abbreviate these by $\mathcal{V}(\R^n)$ and $\mathcal{C}^k(\R^n)$. Throughout, $\norm{\cdot}$ denotes the $\ell_2$ norm on $\R^n$ with corresponding inner product $\langle \cdot, \cdot \rangle$, and $I \in \mathcal{V}(\R^n)$ denotes the identity map.

Given $V \in \mathcal{V}(\R^n)$, we use exponents to denote repeated iterations of $V$. That is, for $k \geq 1$ we define:
\[
V^k(x) := \underbrace{V\circ V \circ \dots \circ V}_{\text{k times}}(x)
\]
By convention, for any $V \in \mathcal{V}(\R^n)$ we define $V^0 := I$.

\paragraph{Summary.} Let $V \in \mathcal{V}(\R^n)$, and $k$ be a positive integer. We study the following question.
\begin{question}\label{question1}
If $V$ is conservative, is $V^k$ also conservative?
\end{question}

This leads to the following definition.
\begin{definition}
$V$ is $k$-conservative if $V^k$ is conservative. $V$ is $\infty$-conservative if $V^k$ is conservative for all $k \geq 1$.
\end{definition}

For convenience, we use ``conservative'' and ``$1$-conservative'' interchangeably. In a slight abuse of notation, we say that $\mathcal{A}\subseteq\mathcal{V}(\R^n)$ is $k$-conservative if for all $V \in \mathcal{A}$, $V$ is $k$-conservative. In order to show that $\mathcal{A}$ is $\infty$-conservative, it suffices to show that $\mathcal{A}$ is conservative and closed under self-composition, as reflected in the following definition.
\begin{definition}
$\mathcal{A}\subseteq \mathcal{V}(\R^n)$ is closed under self-composition if for all $V \in \mathcal{A}$ and $k\geq 1,$ $V^k \in \mathcal{A}$.
\end{definition}
This leads us to the following specialization of \cref{question1}.
\begin{question}\label{question2}
Let $\mathcal{A}\subseteq\mathcal{V}(\R^n)$ be conservative. Is $\mathcal{A}$ closed under self-composition?
\end{question}

\paragraph{Vector Fields and Optimization.} Motivated by optimization, we will often consider vector fields of the form $V(x) = \nabla f(x)$, where $f: \R^n \to \R$ is differentiable. Given a set $\mathcal{F}$ of differentiable functions mapping $\R^n$ to $\R$, we define $\nabla \mathcal{F} = \{V \in \mathcal{V}(\R^n) : V = \nabla f, f \in \mathcal{F}\}$. For $\gamma \in \R$, we define $I - \gamma\nabla\mathcal{F} := \{I - \gamma\nabla f : f \in \mathcal{F}\}$.
A recurring theme in this work is whether a set $I - \gamma\nabla \mathcal{F}$ is $k$-conservative. Such vector fields arise naturally in optimization, as gradient descent on a function $f$ with learning rate $\gamma$ corresponds to the discrete-time dynamical system given by $x_{t+1} = (I-\gamma\nabla f)(x_t)$.

Given an initial point $x_0$, the iterates of gradient descent then satisfy $x_k = V^k(x_0)$ where $V = I-\gamma\nabla f$. Therefore, if $I-\gamma\nabla f$ is $\infty$-conservative, then the $k$-th iterate of gradient descent is actually $\nabla h_k(x_0)$ for some function $h_k: \R^n \to \R$. While this observation may not shed light on centralized optimization, it will prove much more useful when trying to understand the behavior of federated optimization algorithms, as we discuss below.

\section{Connections to Federated Learning}\label{sec:federated_learning}

In federated learning, we often have clients $c = 1, 2, \dots, C$, each with a differentiable loss function $f_c: \R^n \to \R$. The clients can all communicate with some shared server. In many settings, the server would like to minimize the average of the client loss functions:

\begin{equation}
    \min_x f_{avg}(x) := \dfrac{1}{C}\sum_{c=1}^C f_c(x).
\end{equation}
One noteworthy approach to federated learning is \emph{federated averaging} (\fedavg) \citep{mcmahan2017communication}. In this work, we analyze a somewhat simplified, deterministic version of \fedavg (sometimes referred to as local gradient descent~\citep{khaled2019first}) in which all clients participate in every round, and each client uses gradient descent to perform local optimization. In detail, this simplified \fedavg  operates as follows.

The server maintains some global model and uses multiple rounds of communication with the clients to update this model. At each round of \fedavg, the server broadcasts its model to the clients. The clients perform $k$ steps of gradient descent (with learning rate $\gamma$) on their respective loss functions, and send the resulting models to the server. The server then updates its model as the average of these client models, and repeats this process. A full description of this method is given in Algorithm \ref{alg:fedavg}.

\setlength{\textfloatsep}{10pt}
\begin{algorithm}[ht]
    \begin{algorithmic}
	\caption{Simplified \fedavg (aka Local Gradient Descent)}
	\label{alg:fedavg}
	\STATE {\bf Input:} Client loss functions $\{f_c\}_{c=1}^C, k \geq 1, \gamma > 0$, $T \geq 1$, initial model $x_0$,
	\FOR  {$t=0, \cdots, T-1$}
		\STATE The server broadcasts its model $x_{t}$ to all clients.
		\STATE Each client $c$ performs $k$ steps of gradient descent on $f_c$ with step-size $\gamma$ starting at $x_t$.
		\STATE After training, each client $c$ sends its local model $x_t^c$ to the server.
		\STATE The server updates its model via $x_{t+1} = C^{-1}\sum_{c=1}^C x_t^c$.
    \ENDFOR
	\end{algorithmic}
\end{algorithm}

Since communication from clients to the server is frequently a bottleneck~\citep{mcmahan2017communication, bonawitz2019towards}, this algorithm is often practical only when $k > 1$. When $k = 1$, this is equivalent (from the perspective of the server models $\{x_t\}_{t=0}^T$) to gradient descent with learning rate $\gamma$ on $f_{avg}$, the average of the client loss functions.

We now rephrase \cref{alg:fedavg} in terms of iterated vector fields. Define $V_c := I-\gamma\nabla f_c$. At each round $t$ of \fedavg, each client computes $V^k_c(x_t)$, and the server updates its model via the discrete-time dynamical system $x_{t+1} = C^{-1}\sum_{c=1}^C V^k_c(x_t)$. This ``operator-theoretic'' view of \fedavg has been previously used to leverage techniques from operator theory to analyze and design federated learning algorithms \citep{malinovskiy2020local, pathak2020fedsplit, malekmohammadi2021operator}.

In order to allow more general ``server optimization'' in \fedavg, \citet{reddi2021adaptive} propose a ``model delta'' variant. In our setting, this corresponds to the server update
\begin{equation}\label{eq:fedavg_update}
    x_{t+1} = x_t - \dfrac{\eta}{C}\sum_{c=1}^C \left(x_t - V_c^k(x_t)\right)
\end{equation}
where $\eta > 0$ is the server learning rate. Note that when $\eta = 1$, we directly recover \cref{alg:fedavg}. In the sequel we let \fedavg denote the update rule in \eqref{eq:fedavg_update}. If we let $V_s$ be the ``server'' vector field given by
\begin{equation}\label{eq:server_vector_field}
V_s = \dfrac{1}{C}\sum_{c=1}^C (I - V_c^k)
\end{equation}
then \eqref{eq:fedavg_update} is equivalent to
\begin{equation}\label{eq:fedavg_update_alt}
x_{t+1} = x_t-\eta V_s(x_t).
\end{equation}
If each $V_c$ is $k$-conservative, then $V_s$ is an average of conservative vector fields and is conservative as well. Therefore, there is some function $f_s$ such that $\nabla f_s = V_s$, and \eqref{eq:fedavg_update_alt} is equivalent to $x_{t+1} = x_t - \eta\nabla f_s(x_t)$. This is exactly gradient descent on the ``surrogate loss'' $f_s$. This leads us to our guiding observation.

\begin{center}
    \textit{If each $V_c$ is $k$-conservative, then \fedavg is equivalent to gradient descent on some surrogate loss function.}
\end{center}

A special case of this observation was first made and utilized by \citet{charles2021convergence} in the setting that each $f_c$ is a quadratic function. In this work, we consider more general functions, including some non-convex functions.

\subsection{Non-Conservative Dynamics in Federated Learning}\label{sec:non_conservative_dynamics}

As discussed above, when the vector fields $I-\gamma\nabla f_c$ are $k$-conservative, \fedavg with $k$ local steps behaves identically to gradient descent on some surrogate loss. In this section we show that in the absence of this $k$-conservatism, \fedavg can demonstrate fundamentally non-conservative behavior, making its dynamics distinct from those of gradient descent. Notably, this can occur even when $C = 2$ and in fully deterministic settings.

For example, for $c \in \{1, 2\}$, consider the client loss functions
\begin{equation}\label{eq:non_conservative_client_eqs}
f_c(x, y) := f_c^{(1)}(x, y) + f_c^{(2)}(x, y)
\end{equation}
where
\[
f_c^{(1)}(x, y) := \min\left(\frac{\alpha_c}{2}\left(y - y_c\right)^2 +\frac{\beta_c}{2}\left(x - x_c\right)^2, 1\right), 
\]
\[
f_c^{(2)}(x, y) := 
\min\left(\frac{\alpha_c}{2}\left(y + y_c\right)^2 + \frac{\beta_c}{2}\left(x + x_c\right)^2, 1\right).
\]
Here, $\alpha_c, \beta_c \in \R$, $x_c, y_c \in \R^2$ are fixed. Notably, $I - \gamma\nabla f_c$ may not be $k$-conservative for $k > 1$. As we show in \cref{appendix:cyclic_fl}, for some choice of $\alpha_c, \beta_c\in \R$, $x_c, y_c \in \R^2$ (for $c = 1, 2$), $\gamma > 0$ and $k$ sufficiently large, the resulting server vector field $V_s(x, y)$ in \eqref{eq:server_vector_field} is non-conservative.

To help illustrate this, we plot this non-conservative server vector field $V_s(x, y)$ in \cref{fig:cyclic_server_vf_main}. Note there is a region of initial points $(x_0, y_0)$ under which the dynamics of \fedavg are entirely circular and periodic, as long as $\eta$ is sufficiently small. In short, \fedavg may behave badly in the absence of $k$-conservatism.

\begin{figure}[t]
\caption{Two-dimensional non-conservative server vector field $V_s(x, y)$ induced by $f_1, f_2$ in \eqref{eq:non_conservative_client_eqs} for $k$ sufficiently large.}
\includegraphics[width=8cm]{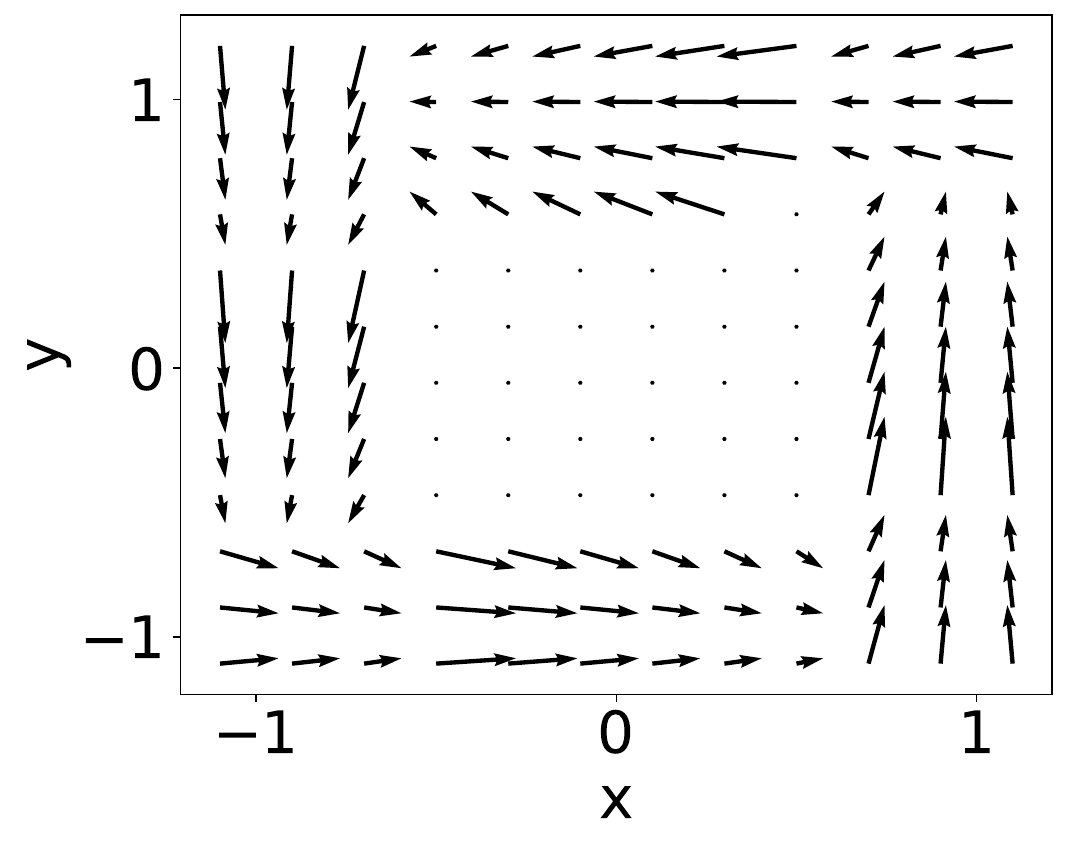}
\centering
\label{fig:cyclic_server_vf_main}
\end{figure}

\section{Examples of $k$-Conservative Vector Fields}\label{sec:examples}

We now give concrete examples of $k$-conservative vector fields. As we will show, these include vector fields associated with linear and logistic regression. Let $\mathcal{P}_d(\R^n, \R^m)$ denote the subset of $\mathcal{V}(\R^n, \R^m)$ whose coordinate functions are homogeneous polynomials of degree $d$. We abbreviate this as $\mathcal{P}_d(\R^n)$ when $n = m$. For more in-depth examples, see \cref{appendix:examples}.

\paragraph{Constant Vector Fields.} The space $\mathcal{P}_0(\R^n)$ of constant vector fields is clearly closed under self-composition. Constant vector fields are conservative, so $\mathcal{P}_0(\R^n)$ is $\infty$-conservative.

\paragraph{Affine Vector Fields.} Let $\mathcal{A}(\R^n)$ be the set of affine vector fields in $\mathcal{V}(\R^n)$. This consists of all $V$ of the form $V(x) = Ax + b$ for $A \in \R^{n \times n}$, $b \in \R^n$. Let $\mathcal{S}(\R^n)$ denote the set of such $V$ where $A$ is symmetric. Note that $\mathcal{S}(\R^n)$ is closed under self-composition. A straightforward computation shows that $V$ is conservative if and only if $A$ is symmetric. Hence, $V \in \mathcal{A}(\R^n)$ is conservative if and only if $V \in \mathcal{S}(\R^n)$, in which case it is also $\infty$-conservative. In particular, if $f$ is a quadratic function then $\nabla f$ and $I-\gamma\nabla f$ are both $\infty$-conservative.

\paragraph{Continuous Univariate Functions.} Consider the set $\mathcal{C}^0(\R)$ of continuous functions from $\R$ to $\R$. By elementary analysis, $\mathcal{C}^0(\R)$ is closed under self-composition, and by the fundamental theorem of calculus, it is conservative. Thus, $\mathcal{C}^0(\R)$ is $\infty$-conservative.

More generally, let $\mathcal{C}^0(\R)^n$ denote the subset of $\mathcal{V}(\R^n)$ containing vector fields of the form
\[
V(x_1, \dots, x_n) = (f_1(x_1), f_2(x_2), \dots, f_n(x_n))
\]
where $f_1, \dots, f_n \in \mathcal{C}^0(\R)$. Then note that
\[
V(x_1, \dots, x_n) = \nabla \left( \sum_{i=1}^n \int_0^{x_i} f_i(t)dt \right)
\]
so $\mathcal{C}^0(\R)^n$ is conservative. Since $\mathcal{C}^0(\R)^n$ is closed under self-composition, it is also $\infty$-conservative.

\paragraph{Non-example: Cubic Polynomials.} Let $f(x, y) = x^2y$. By direct computation,
\[
(\nabla f)^2(x, y) = \begin{pmatrix}4x^3y \\ 4x^2y^2 \end{pmatrix} =: \begin{pmatrix}h_1(x, y) \\ h_2(x, y)\end{pmatrix}.
\]
We then have $\frac{\partial}{\partial y}h_1(x, y) = 4x^3, \frac{\partial}{\partial x}h_2(x, y) = 8xy^2$. By Clairaut's theorem (see \cite[Chapter 4]{spivak2018calculus}), $(\nabla f)^2$ is not conservative. Thus, $\nabla \mathcal{P}_3(\R^2, \R)$ is conservative but not $2$-conservative.

\subsection{Gradient Vector Fields of Generalized Linear Models}\label{example:gradient_glms}

Let $\mathcal{G}(\R^n, \R) \subsetneq \mathcal{C}^1(\R^n, \R)$ denote the class of functions $f: \R^n \to \R$ of the form
\begin{equation}\label{eq:generalized_linear}
f(x) = \sum_{i=1}^m \sigma(\langle x, z_i\rangle)
\end{equation}
where $m$ is a positive integer, $z_i \in \R^n$, and $\sigma \in \mathcal{C}^1(\R)$. Such functions arise in statistics and optimization when learning generalized linear models. For example, when $\sigma(t) = \ln(1+ e^{-t})$, \eqref{eq:generalized_linear} is effectively the loss function used in logistic regression.

We further define $\mathcal{G}_{\perp}(\R^n, \R) \subsetneq \mathcal{G}(\R^n, \R)$ to be the set of functions of the form \eqref{eq:generalized_linear} where $\{z_i\}_{i=1}^m$ are mutually orthogonal. We then have the following result.

\begin{theorem}\label{thm:glm_li}
Let $f \in \mathcal{G}_{\perp}(\R^n, \R)$ be as in \eqref{eq:generalized_linear}. Let $\phi_i(t) = \norm{z_i}^2\sigma'(t)$. For all $k \geq 2$,
\begin{equation}\label{eq:glm_phi}
    (\nabla f)^k(x) = \nabla \left(\sum_{i=1}^m \int_0^{\langle x, z_i\rangle} \sigma'(\phi_{i}^{k-1}(t))dt \right).
\end{equation}
Thus, $\nabla\mathcal{G}_{\perp}(\R^n, \R)$ is $\infty$-conservative and closed under self-composition.
\end{theorem}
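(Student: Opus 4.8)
The plan is to exploit the orthogonality of the $z_i$ to decouple the iteration into $m$ independent one-dimensional dynamical systems, one along each direction $z_i$. First I would compute the base field $\nabla f(x) = \sum_{i=1}^m \sigma'(\langle x, z_i\rangle)\, z_i$. The crucial observation is that pairing this with $z_j$ and using $\langle z_i, z_j\rangle = 0$ for $i \neq j$ collapses the sum to a single term:
\[
\langle \nabla f(x), z_j\rangle = \norm{z_j}^2\,\sigma'(\langle x, z_j\rangle) = \phi_j(\langle x, z_j\rangle).
\]
Thus the scalar $\langle x, z_j\rangle$ evolves under one application of $\nabla f$ by precisely the one-dimensional map $\phi_j$, and the different directions never interact.

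Writing $V = \nabla f$, I would next prove by induction on $t$ that $\langle V^t(x), z_j\rangle = \phi_j^t(\langle x, z_j\rangle)$ for every $t \geq 0$ and every $j$; the inductive step is immediate from the decoupling identity applied at the point $V^t(x)$. Feeding this into one further application of $V$ gives the closed form
\[
V^k(x) = V\bigl(V^{k-1}(x)\bigr) = \sum_{i=1}^m \sigma'\bigl(\phi_i^{k-1}(\langle x, z_i\rangle)\bigr)\, z_i .
\]
It then remains only to recognize the right-hand side as a gradient. Differentiating $g(x) := \sum_{i=1}^m \int_0^{\langle x, z_i\rangle} \sigma'(\phi_i^{k-1}(t))\,dt$ via the fundamental theorem of calculus and the chain rule yields $\nabla g(x) = \sum_{i=1}^m \sigma'(\phi_i^{k-1}(\langle x, z_i\rangle))\, z_i$, matching $V^k(x)$ term by term. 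This establishes \eqref{eq:glm_phi}, and since it exhibits an explicit potential for every $k$, it shows $\nabla\mathcal{G}_{\perp}$ is $\infty$-conservative.

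For closure under self-composition I would observe that the surrogate $g$ is again a sum of $\mathcal{C}^1$ univariate functions evaluated on the same orthogonal forms $\langle x, z_i\rangle$. The cleanest way to package this is a change of coordinates: choosing an orthonormal basis aligned with $u_i = z_i/\norm{z_i}$ turns $\nabla f$ into a coordinate-wise map of the type in \cref{example:continous_1d_coordinates}, acting as $\xi_i \mapsto \norm{z_i}\sigma'(\norm{z_i}\xi_i)$ along each $u_i$ and as $0$ on the orthogonal complement. Since $\mathcal{C}^1(\R)^n$ is conservative and closed under self-composition, and since conjugation by the orthogonal change of basis commutes with iteration and preserves conservatism (pulling back a potential by an orthogonal map is again a potential), both conclusions transfer back to $\nabla\mathcal{G}_{\perp}$.

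I expect the main obstacle to be conceptual rather than computational: spotting the decoupling identity $\langle \nabla f(x), z_j\rangle = \phi_j(\langle x, z_j\rangle)$, after which the induction and the gradient verification are routine. The one point demanding care is the closure claim, since the per-direction profiles $\int_0^{\,\cdot\,}\sigma'(\phi_i^{k-1}(t))\,dt$ generally differ across $i$ when the norms $\norm{z_i}$ differ; the reduction to $\mathcal{C}^1(\R)^n$, which genuinely permits distinct per-coordinate functions, is what makes the closure statement rigorous and shows this mild enlargement is immaterial to conservatism.
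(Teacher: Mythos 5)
Your proof of the central identity \eqref{eq:glm_phi} is correct and follows essentially the same route as the paper: the decoupling identity $\langle \nabla f(x), z_j\rangle = \phi_j(\langle x, z_j\rangle)$ that you isolate is exactly the orthogonality step in the paper's induction, and the verification that the right-hand side is a gradient via the fundamental theorem of calculus and the chain rule is identical. The one place you genuinely diverge is the closure claim. The paper's proof stops at the explicit potential and treats closure as immediate, even though, as you correctly observe, $(\nabla f)^k$ has per-direction profiles $\sigma'(\phi_i^{k-1}(\cdot))$ that differ across $i$ when the $\norm{z_i}$ differ, so the iterate does not literally have the single-$\sigma$ form of \eqref{eq:generalized_linear}. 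Your reduction --- conjugating by an orthonormal basis aligned with $z_i/\norm{z_i}$ so that $\nabla f$ becomes a coordinate-wise map in $\mathcal{C}^1(\R)^n$, and noting that orthogonal conjugation commutes with iteration and preserves conservatism --- is a clean way to make both the $\infty$-conservatism and the closure statement rigorous without any fresh computation, at the cost of invoking \cref{example:continous_1d_coordinates} rather than keeping everything self-contained. Both arguments are valid; yours is slightly more careful about what ``closed under self-composition'' means for this class.
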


\begin{proof}
Let $V = \nabla f$. We claim that for all $k \geq 1$,
\[
    V^k(x) = \sum_{i=1}^m\sigma'(\phi_i^{k-1}(\langle x, z_i\rangle))z_i
\]
where $\phi_i^0$ is the identity function. We will show this inductively. This clearly holds for $k = 1$. We then have
\begin{align*}
    V^{k+1}(x) &= \sum_{i=1}^m \sigma'(\langle V^k(x), z_i\rangle)z_i\\
    &= \sum_{i=1}^m\sigma'\bigg(\left\langle \sum_{j=1}^m\sigma'(\phi_i^{k-1}\big(\langle x, z_j\rangle\big))z_j, z_i \right\rangle\bigg)z_i\\
    &= \sum_{i=1}^m\sigma'\bigg(\norm{z_i}^2\sigma'(\phi_i^{k-1}\big(\langle x, z_i\rangle\big))\bigg)z_i\\
    &= \sum_{i=1}^m\sigma'(\phi_i^k(\langle x, z_i\rangle))z_i.
\end{align*}

Here, the second equality follows from the inductive hypothesis, while the third follows from the orthogonality of the $z_i$. Therefore, if we define $h_k: \R \to \R$ via
\[
h_k(x) = \sum_{i=1}^m\int_0^{\langle x, z_i\rangle}\sigma'(\phi_i^{k-1}(t))dt
\]
then by the chain rule,
\[
\nabla h_k(x) = \sum_{i=1}^m \sigma'(\phi_i^{k-1}(\langle x, z_i\rangle))z_i = V^k(x).
\]
\end{proof}

In order to understand the dynamics of gradient descent on generalized linear models, we now extend Theorem \ref{thm:glm_li} to the function class $I-\gamma\nabla\mathcal{G}_{\perp}(\R^n, \R)$.

\begin{theorem}\label{thm:glm_li_gd}
Let $f \in \mathcal{G}_{\perp}(\R^n, \R)$ be as in \eqref{eq:generalized_linear} and fix $\gamma \in \R$. Let $\psi_i(t) = t - \gamma\|z_i\|^2\sigma'(t)$. For all $k \geq 2$,
\begin{equation}\label{eq:glm_psi}
    (I-\gamma\nabla f)^k(x) = x - \gamma\nabla \left(\sum_{i=1}^m \int_0^{\langle x, z_i\rangle} \sigma'(\psi_{i}^{k-1}(t))dt \right).
\end{equation}
Thus, $I-\gamma\nabla\mathcal{G}_{\perp}(\R^n, \R)$ is $\infty$-conservative and closed under self-composition.
\end{theorem}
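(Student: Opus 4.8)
The plan is to run the same coordinate-decoupling argument as in \cref{thm:glm_li}, but for the gradient-descent map $V := I - \gamma\nabla f$ in place of $\nabla f$. The engine of the previous proof was that orthogonality of the $z_i$ decouples the scalar coordinates $\langle \cdot, z_i\rangle$, and the same phenomenon applies here. First I would record that for any $y \in \R^n$,
\[
\langle V(y), z_i\rangle = \langle y, z_i\rangle - \gamma\norm{z_i}^2\sigma'(\langle y, z_i\rangle) = \psi_i(\langle y, z_i\rangle),
\]
so along the $i$-th direction $V$ acts exactly as the scalar map $\psi_i$, and hence $\langle V^\ell(x), z_i\rangle = \psi_i^\ell(\langle x, z_i\rangle)$ for every $\ell \geq 0$. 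I would also note that $V$ only ever adds multiples of the $z_i$, so it fixes the orthogonal complement of $\mathrm{span}\{z_i\}$ pointwise.

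The second step is to unfold the iteration additively rather than multiplicatively. Since $V = I-\gamma\nabla f$, telescoping the composition gives
\[
V^k(x) = x - \gamma\sum_{\ell=0}^{k-1}\nabla f\big(V^\ell(x)\big) = x - \gamma\sum_{i=1}^m\left(\sum_{\ell=0}^{k-1}\sigma'\big(\psi_i^\ell(\langle x, z_i\rangle)\big)\right) z_i,
\]
where the last equality uses $\nabla f(y) = \sum_i\sigma'(\langle y, z_i\rangle)z_i$ together with $\langle V^\ell(x), z_i\rangle = \psi_i^\ell(\langle x, z_i\rangle)$. This is precisely the point where the present statement departs from \cref{thm:glm_li}: because the gradient-descent map keeps its argument $x$ rather than discarding it, the $i$-th weight is an \emph{accumulation} of $\sigma'$ along the whole orbit $\psi_i^0,\dots,\psi_i^{k-1}$, not a single term. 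Summing the one-step recursion $\psi_i^{\ell+1}(t) - \psi_i^\ell(t) = -\gamma\norm{z_i}^2\sigma'(\psi_i^\ell(t))$ yields the telescoping identity $\sum_{\ell=0}^{k-1}\sigma'(\psi_i^\ell(t)) = (t - \psi_i^k(t))/(\gamma\norm{z_i}^2)$, which keeps the bookkeeping clean.

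It then remains to recognize the accumulated displacement as a gradient, as in the final step of \cref{thm:glm_li}. Since each coordinate weight depends on $x$ only through the scalar $\langle x, z_i\rangle$, I would set
\[
g_k(x) = \sum_{i=1}^m\int_0^{\langle x, z_i\rangle}\left(\sum_{\ell=0}^{k-1}\sigma'\big(\psi_i^\ell(t)\big)\right)dt
\]
and verify by the chain rule that $\nabla g_k(x) = \sum_i\big(\sum_{\ell=0}^{k-1}\sigma'(\psi_i^\ell(\langle x, z_i\rangle))\big)z_i$, so that $V^k = I - \gamma\nabla g_k$. This shows every iterate is conservative, i.e.\ $I-\gamma\nabla\mathcal{G}_\perp$ is $\infty$-conservative.

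For the closure claim, and as a cleaner route to the qualitative conclusion, I would observe that in an orthonormal basis adapted to $\{z_i/\norm{z_i}\}$ (completed arbitrarily on the complement), $V$ is a \emph{coordinatewise} map: the identity on the complement and a univariate $\mathcal{C}^1$ map on each $z_i$-axis. Since an orthogonal change of variables preserves conservatism and composition preserves the coordinatewise form, $V$ and all its iterates reduce to the class of \cref{example:continous_1d_coordinates}, which is $\infty$-conservative and closed under self-composition. The one thing that must be gotten right is the accumulation in the second step: the potential has to integrate the full orbit sum $\sum_{\ell=0}^{k-1}\sigma'(\psi_i^\ell(t))$, not merely the terminal term $\sigma'(\psi_i^{k-1}(t))$, so I expect that bookkeeping to be the main place where care is required.
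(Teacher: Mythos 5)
Your argument is correct, and it is worth being explicit that it does \emph{not} reproduce the paper's proof: it diverges at exactly the point you flag, and you are right to diverge there. The paper's proof asserts, as ``a slight modification'' of the induction in \cref{thm:glm_li}, the closed form $V^k(x) = x - \gamma\sum_{i=1}^m \sigma'(\psi_i^{k-1}(\langle x, z_i\rangle))z_i$, which is what \eqref{eq:glm_psi} encodes. That closed form fails for $k \geq 2$: taking $n = m = 1$ and $z_1 = 1$, one has
\[
V^2(x) = \psi_1(\psi_1(x)) = x - \gamma\sigma'(x) - \gamma\sigma'(\psi_1(x)),
\]
whereas the displayed formula gives only $x - \gamma\sigma'(\psi_1(x))$. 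The problem is exactly the one you identify: unlike $\nabla f$, the map $I - \gamma\nabla f$ retains its argument, so each application deposits a new increment $-\gamma\sigma'(\psi_i^\ell(\langle x,z_i\rangle))z_i$ and the total displacement along $z_i$ is the orbit sum $\sum_{\ell=0}^{k-1}\sigma'(\psi_i^\ell(\cdot))$, not the terminal term. Your telescoped formula is the one consistent with the projection identity $\langle V^k(x), z_i\rangle = \psi_i^k(\langle x, z_i\rangle)$, via your observation that $\sum_{\ell=0}^{k-1}\sigma'(\psi_i^\ell(t)) = (t-\psi_i^k(t))/(\gamma\norm{z_i}^2)$; the paper's formula would instead force $\psi_i^{k-1} = \mathrm{id}$. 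So the correct version of \eqref{eq:glm_psi} should integrate $\sum_{\ell=0}^{k-1}\sigma'(\psi_i^\ell(t))$ in place of $\sigma'(\psi_i^{k-1}(t))$; with that correction your potential $g_k$ is a valid witness, and the qualitative conclusions of the theorem ($\infty$-conservatism and closure under self-composition) stand. Your closing observation --- that after an orthogonal change of basis adapted to $\{z_i/\norm{z_i}\}$ the map $V$ is coordinatewise and therefore falls under \cref{example:continous_1d_coordinates} --- is a clean independent confirmation of those qualitative claims, and is arguably the most robust route to them since it sidesteps the explicit closed form entirely.
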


\begin{proof}
The proof is nearly identical to the proof of Theorem \ref{thm:glm_li}. Let $V(x) = x - \gamma\nabla f(x)$. A slight modification of the inductive argument in the proof of Theorem \ref{thm:glm_li} implies that
\[
V^{k}(x) = x - \gamma\sum_{i=1}^m\sigma'(\psi_i^{k-1}(\langle x, z_i\rangle))z_i.
\]
By the chain rule, this implies that
\[
V^k(x) = x - \gamma\nabla \left(\sum_{i=1}^m \int_0^{\langle x, z_i\rangle} \sigma'(\psi_i^{k-1}(t))dt\right).
\]
\end{proof}

On the other hand, $\nabla\mathcal{G}(\R^n, \R)$ is not $2$-conservative. Let $f_1(x,y) = e^x, f_2(x, y) = e^{x+y}$, $f_3 = f_1 + f_2$. Note that by Theorem \ref{thm:glm_li}, $\nabla f_1, \nabla f_2$ are both $\infty$-conservative. However, by direct computation
\[
(\nabla f_3)^2(x, y) = \begin{pmatrix}\exp(e^x + e^{x+y}) + \exp(e^x + 2e^{x+y})\\ \exp(e^x+2e^{x+y})\end{pmatrix} =: \begin{pmatrix}h_1(x, y) \\ h_2(x, y)\end{pmatrix}.
\]
One can then verify that $\frac{\partial}{\partial y}h_1(x, y) \neq \frac{\partial}{\partial x}h_2(x,y)$,
so by Clairaut's theorem, $\nabla f_3$ is not 2-conservative. Notably, $f_1, f_2$ and $f_3$ are all convex functions, demonstrating that whether $\nabla \mathcal{F}$ is $\infty$-conservative is not determined by whether the class $\mathcal{F}$ is convex.

While $f \in \mathcal{G}_\perp(\R^n, \R)$ implies that $\nabla f$ is $\infty$-conservative, exactly characterizing the set of $\infty$-conservative vector fields in $\nabla\mathcal{G}(\R^n, \R)$ remains an open question. In particular, it is unclear whether there are any $\infty$-conservative vector fields in $\nabla \mathcal{G}(\R^n, \R)\backslash\nabla \mathcal{G}_\perp(\R^n, \R)$. Part of the difficulty in this problem comes from the fact that a function $f \in \mathcal{C}^\infty(\R^n, \R)$ can have multiple representations satisfying \eqref{eq:generalized_linear}.

\section{Smooth $k$-Conservative Vector Fields}\label{sec:smooth_vector_fields}

We now explicitly construct the space of smooth, $k$-conservative vector fields. Given $V \in \mathcal{C}^\infty(\R^n)$, let $J(V): \R^n \to \R^{n\times n}$ denote its Jacobian, which we can view as an $n\times n$ matrix over $\mathcal{C}^\infty(\R^n, \R)$.
If $V \in \mathcal{C}^\infty(\R^n)$, then by the Poincar\'e lemma \citep[Section 4.18]{diff_manifolds}, $V$ is $k$-conservative if and only if $J(V^k)$ is symmetric.
For $k \geq 1$, we then define $D_k: \mathcal{C}^\infty(\R^n) \to \mathcal{C}^\infty(\R^n, \R^{n\times n})$ by
\begin{equation}\label{eq:D_k_map}
D_k(V) := J(V^k) - J(V^k)^{\intercal}.
\end{equation}
Thus, $V \in \mathcal{C}^\infty(\R^n)$ is $k$-conservative if and only if $D_k(V) = 0$.
We may now define the space of smooth, $k$-conservative vector fields by $\mathcal{W}^k(\R^n) := D_k^{-1}(\{0\})$ and $\mathcal{W}^\infty(\R^n) := \cap_{k=1}^\infty \mathcal{W}^k(\R^n)$.
We note a few facts about $\mathcal{W}^\infty(\R^n)$:
\begin{enumerate}
\item  $\mathcal{W}^k(\R^n)$ and $\mathcal{W}^\infty(\R^n)$ are closed in $\mathcal{C}^\infty(\mathbb{R}^n)$ under several natural topologies, like that of uniform convergence of all derivatives on compact sets. To see this, note that $D_k$ is a continuous function in this topology, so $D_k^{-1}(\{0\}) = \mathcal{W}^k(\R^n)$ is closed. Thus, $\mathcal{W}^\infty(\R^n)$ is an intersection of closed sets, and is closed itself.
\item $\mathcal{W}^\infty(\R^n)$ is closed under scalar multiplication. While it contains linear subspaces (such as the space of symmetric linear vector fields, see \cref{sec:examples}), it is not closed under addition. For a simple counter-example, see the end of \cref{example:gradient_glms}.
\item While $\mathcal{W}^\infty(\R^n)$ is closed under self-composition, it is not closed under arbitrary composition. See \cref{appendix:examples} for an explicit counter-example.
\end{enumerate}

\noindent Some basic open questions on the structure of $\mathcal{W}^\infty(\R^n)$:
\begin{enumerate}
    \item How does $W^k(\R^n)$ relate to $W^j(\R^n)$ for $k \neq j$? As we show in \cref{appendix:examples}, $\mathcal{W}^k(\R^n) \not\subseteq \mathcal{W}^j(\R^n)$ for $j < k$. More generally, are there smooth vector fields that are $k$-conservative but not $j$-conservative for $j \neq k$?
    \item If we restrict to $\mathcal{P}_d(\R^n)$, the zero locus of $D_k$ defines a projective variety over the coefficients of polynomials in $\mathcal{P}_d(\R^n)$. For example, applying \eqref{eq:D_k_map} to $\mathcal{P}_d(\R^n)$, we find:
    \begin{itemize}
        \item  $\mathcal{W}^1(\R^n)\cap \mathcal{P}_1(\R^n)$ is a hyperplane.
        \item $\mathcal{W}^2(\R^2)\cap\mathcal{P}_1(\R^2)$ is a union of two hyperplanes.
        \item $\mathcal{W}^3(\R^2)\cap\mathcal{P}_1(\R^2)$ is a union of a hyperplane and a quadric surface.
        \item $\mathcal{W}^1(\R^2) \cap \mathcal{W}^2(\R^2) \cap \mathcal{P}_2(\R^2)$ is a quadric surface.
    \end{itemize}
    See \cref{appendix:examples} for the full details on these computations. Can we say anything more general? For example, what is the degree of $\mathcal{W}^k(\R^n)\cap \mathcal{P}_d(\R^n)$?
    \item For all $k \geq 1$, define $\rho_k: \mathcal{W}^\infty(\R^n) \to \mathcal{W}^\infty(\R^n)$ via $V \mapsto V^k$. Many of the discussions above can be rephrased in terms of properties of this map. For example, \cref{thm:glm_li} implies that $\rho_k$ is an endomorphism on $\nabla\mathcal{G}_\perp(\R^n, \R)$. Are there other important function classes for which $\rho_k$ is an endomorphism? More broadly speaking, we may also wish to understand the image of $\rho_k$. Note that this is important for federated learning, as according to the discussion in \cref{sec:federated_learning}, this will govern what kinds of dynamics of \fedavg are possible in settings where clients have $\infty$-conservative loss functions.
\end{enumerate}

\section{Conservatism and Lifting}\label{sec:optimization}

In this section, we show that if $V$ is $k$-conservative, then many properties of $V$ ``lift'' to the vector field $V^k$. In particular, we will show that many properties important for optimization (including convexity, smoothness, and Lipschitz-continuity) will lift under certain assumptions related to $k$-conservatism. By applying these lifting results to vector fields arising in federated learning (\cref{sec:federated_learning}), we will be able to ``lift'' convergence rates for centralized optimization algorithms to federated learning algorithms (\cref{sec:convergence}).

Note that for smooth functions, properties such as convexity can be rephrased in terms of eigenvalues of Jacobian matrices. As we will show, under $k$-conservatism, self-compositions of vector fields will yield eigenvalues that behave in predictable ways.

\begin{proposition}\label{prop:preserve_eig}
Suppose $V \in \mathcal{C}^\infty(\R^n)$ is $j$-conservative for $1 \leq j \leq k$, with $V^j = \nabla g_j$. Then for all such $j$, the function $g_j$ is smooth and satisfies:
\begin{enumerate}
    \item \label{prop:preserve_eig_second} Suppose there are $\alpha, \beta \geq 0$ such that for all $x$, $\alpha I \preceq J(V)(x) \preceq \beta I$. Then for all $x$,
    \[\alpha^kI \preceq J(\nabla g_j)(x) \preceq \beta^k I.\]
    \item \label{prop:preserve_eig_third} Suppose there is some $\lambda\geq 0$ such that for all $x$, $-\lambda I \preceq J(V)(x) \preceq \lambda I$. Then for all $x$,
\[-\lambda^kI \preceq J(\nabla g_j)(x) \preceq \lambda^k I.\]
\end{enumerate}
\cref{prop:preserve_eig_second,prop:preserve_eig_third} also hold if we change $\preceq$ to $\prec$ throughout.

\end{proposition}

\begin{proof}
Since $V^j = \nabla g_j$ (and in particular, $g_j$ is differentiable), we must have $g_j \in \mathcal{C}^\infty(\R^n, \R)$. For \cref{prop:preserve_eig_second}, we proceed inductively. For $k = 1$, the result holds by assumption. For the inductive step, let $2 \leq k \leq K$, and assume the result holds for $k-1$. Define $J_j(x) := J(\nabla g_j)(x)$, so that in particular, $J_1(x) = J(V)(x)$. By the chain rule,
\begin{equation}\label{eq:inductive_jacobian}
J_j(x) = J_1(\nabla g_{j-1}(x))J_{j-1}(x).
\end{equation}
By the inductive hypothesis, we have
\[
\alpha^{j-1} I \preceq J_{j-1}(x) \preceq \beta^{j-1} I
\]
and by our assumptions on $V$, we have
\[
\alpha I \preceq J_1(\nabla g_{j-1})(x) \preceq \beta I.
\]
Since $J_j(x)$ is symmetric (as it is the Jacobian of a gradient field), its eigenvalues are therefore products of eigenvalues of $J_{j-1}(x)$ and $J_1(\nabla g_{j-1})(x)$. Hence, its maximum eigenvalue is at most $\beta^j$, and its minimum eigenvalue is at most $\alpha^j$.

The proof of \cref{prop:preserve_eig_third} follows in a similar way, noting that by the inductive hypothesis, the matrices on the right-hand side of \eqref{eq:inductive_jacobian} will have eigenvalues in the ranges of $[-\lambda, \lambda]$ and $[-\lambda^{j-1}, \lambda^{j-1}]$. Since $J_j(x)$ is symmetric, its eigenvalues are products of the eigenvalues of the matrices in the right-hand side of \eqref{eq:inductive_jacobian}, and the result follows.
\end{proof}

\begin{remark}
Note the critical role of symmetry in the argument above. In $\R^n$, $J(V^k)$ is symmetric if and only if $V$ is $k$-conservative. Thus, $k$-conservatism is exactly the condition required for us to reason about how the eigenvalues of $J(V^k)$ relate to that of $J(V)$.
\end{remark}

We will use Proposition \ref{prop:preserve_eig} to show that iterating $\infty$-conservative vector fields preserves geometric properties, including Lipschitz continuity, as in the following definition.

\begin{definition}
A vector field $V \in \mathcal{C}^1(\R^n)$ is $\beta$-Lipschitz continuous if for all $x \in \R^n$, $\|J(V)(x)\| \leq \beta$. $V$ is Lipschitz continuous if there is some $\beta$ for which $V$ is $\beta$-Lipschitz continuous.
\end{definition}

In the definition above, $\|\cdot\|$ is the operator norm induced by the $\ell_2$ norm on $\R^n$, viewing $J(V)(x)$ as an $n\times n$ matrix over $\R$. In the following, we let $\mathcal{L}(\R^n) \subsetneq \mathcal{V}(\R^n)$ denote the set of Lipschitz continuous vector fields. Proposition \ref{prop:preserve_eig} implies the following result.

\begin{corollary}\label{cor:preserve_convex}
Let $\mathcal{F} \subsetneq \mathcal{C}^\infty(\R^n, \R)$ be the set of (a) smooth, strongly convex functions, (b) smooth, strictly convex functions, or (c) smooth, convex functions. Then $\nabla\mathcal{F}\cap\mathcal{W}^\infty(\R^n)$ and $\nabla\mathcal{F}\cap\mathcal{W}^\infty(\R^n)\cap\mathcal{L}(\R^n)$ are closed under self-composition.
\end{corollary}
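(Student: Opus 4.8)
The plan is to fix an arbitrary $V = \nabla f$ in the set under consideration and show that $V^k$ lies in the same set for every $k \geq 1$; closure under self-composition is then immediate. Since membership in $\nabla\mathcal{F}\cap\mathcal{W}^\infty(\R^n)$ (resp.\ $\cap\,\mathcal{L}(\R^n)$) is a conjunction of conditions, I would verify each condition separately for $V^k$. All three choices of $\mathcal{F}$ in (a), (b), (c) will be handled by the same mechanism, differing only in which eigenvalue bound is invoked.

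First I would dispatch the $\mathcal{W}^\infty$-membership and smoothness. Because $\mathcal{W}^\infty(\R^n)$ is closed under self-composition (noted in \cref{sec:general_w_infty}), we have $V^k \in \mathcal{W}^\infty(\R^n)$; in particular $V$ is $\infty$-conservative, so $V^k = \nabla g_k$ for some $g_k$, and $g_k$ is smooth by the first conclusion of \cref{prop:preserve_eig}. This already places $V^k$ in the gradient field of a smooth function and in $\mathcal{W}^\infty(\R^n)$.

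The crux is showing that $g_k$ inherits the convexity class of $f$, which is exactly the content of \cref{prop:preserve_eig}. For (c), I would apply the positive-semidefinite lower-bound conclusion with $\alpha = 0$; for (a), the strict lower bound $\alpha > 0$ yields that $g_k$ is $\alpha^k$-strongly convex; for (b), I would use the $\prec$ version pointwise, since strict convexity need not be uniform. The one genuinely subtle point, and the place where I expect the main obstacle to live, is that a product of three or more symmetric positive-semidefinite matrices can fail to have non-negative (or even real) eigenvalues, so convexity cannot be read off from a bare matrix product. The way around this is precisely the inductive structure of \cref{prop:preserve_eig}: at stage $j$ the accumulated Jacobian $J(\nabla g_{j-1})(x)$ is genuinely symmetric — because $V$ is $(j-1)$-conservative, available since $V \in \mathcal{W}^\infty(\R^n)$ — and positive semidefinite by the inductive hypothesis, so multiplying it by the single new symmetric positive-semidefinite Hessian factor $J(\nabla f)(\nabla g_{j-1}(x))$ produces a matrix that is symmetric (by $j$-conservatism) with non-negative eigenvalues, hence positive semidefinite. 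The strict and strong variants are identical with $0$ replaced by the relevant positive bound.

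Finally, for part (2) I would add the Lipschitz check. If $V$ is $\beta$-Lipschitz then $\|J(\nabla f)(x)\| \leq \beta$, equivalently $-\beta I \preceq J(\nabla f)(x) \preceq \beta I$, so the two-sided eigenvalue bound of \cref{prop:preserve_eig} gives $\|J(\nabla g_k)(x)\| \leq \beta^k$ for all $x$, whence $V^k$ is $\beta^k$-Lipschitz and membership in $\mathcal{L}(\R^n)$ is preserved. Beyond the positive-semidefinite-product subtlety already resolved by symmetry, the remainder is bookkeeping that simply combines the closure of $\mathcal{W}^\infty(\R^n)$ with the eigenvalue estimates established in \cref{prop:preserve_eig}.
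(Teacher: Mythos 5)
Your proposal is correct and follows essentially the same route as the paper: both reduce the claim to \cref{prop:preserve_eig}, instantiating the eigenvalue bounds with $\alpha > 0$, $\alpha = 0$, or the $\prec$ variant for the strongly convex, convex, and strictly convex cases respectively, and both rely on the closure of $\mathcal{W}^\infty(\R^n)$ under self-composition for the remaining membership condition. The only cosmetic difference is that you obtain the Lipschitz bound from the two-sided symmetric estimate (part 3 of the proposition) rather than the $\alpha I \preceq J \preceq \beta I$ estimate (part 2); both give $\beta^k$-Lipschitz continuity of $\nabla g_k$.
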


\begin{proof}
This follows directly from Proposition \ref{prop:preserve_eig} by setting $V = \nabla f$ for $f \in \mathcal{F}$. For (a), if $f$ is smooth and strongly convex, then there is some $\alpha > 0$ such that $\alpha I \preceq J(\nabla f)(x)$ for all $x$. Since $\nabla f \in \mathcal{W}^\infty(\R^n)$, for all $k \geq 1$, there is some $g_k$ such that $\nabla g_k = (\nabla f)^k$. By Proposition \ref{prop:preserve_eig}, we have $\alpha^k I \preceq J(\nabla g_k)(x)$, so $g_k$ is smooth and strongly convex. If $\nabla f$ is also Lipschitz continuous, then there is some $\beta$ for which $J(\nabla f)(x) \preceq \beta I$ for all $x$, and a similar argument shows that $\alpha^k I \preceq J(\nabla g_k)(x) \preceq \beta^k I$.

The convex and strictly convex cases follow in an analogous manner, as they correspond respectively to the bounds $0 \preceq J(\nabla f)(x)$ and $0 \prec J(\nabla f)(x)$, which are preserved under $k$-fold composition by Proposition \ref{prop:preserve_eig}.
\end{proof}

Thus, convexity "lifts" under self-composition of the associated gradient vector field: If $f$ is smooth, convex, and $\nabla f$ is $j$-conservative for $1 \leq j \leq k$, then $(\nabla f)^k = \nabla g$ for some smooth, convex function $g$.

Next, we consider vector fields of the form $V = I-(I-\gamma \nabla f)^k$ where $\gamma > 0$. Note that such vector fields arise naturally in the context of federated learning, as in \eqref{eq:server_vector_field}. In the following lemma, we show that if $V$ is $\infty$-conservative and $V^k = \nabla h_k$, then $h_k$ inherits smoothness and critical points from $f$.

\begin{lemma}\label{lemma:client_properties}
Let $f \in \mathcal{C}^\infty(\R^n, \R)$ and $\gamma \in \R_{> 0}$. Suppose that $I-\gamma\nabla f$ is $j$-conservative for $1 \leq j \leq k$. Then $V_k := I - (I-\gamma\nabla f)^k$ is conservative. Furthermore, if $\nabla h_k = V_k$ then:
\begin{enumerate}
    \item $h_k$ is smooth.
    \item If $\nabla f(y) = 0$, then $\nabla h_k(y) = 0$.
\end{enumerate}
\end{lemma}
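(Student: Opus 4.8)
The plan is to exploit linearity of the gradient operator together with a fixed-point observation, leaning on the machinery already established in the excerpt. First I would establish that $V_j$ is conservative. The identity map satisfies $I = \nabla\!\left(\tfrac12\norm{x}^2\right)$, and by the hypothesis that $I-\gamma\nabla f$ is $j$-conservative, there is a function $\tilde h_j$ with $\nabla \tilde h_j = (I-\gamma\nabla f)^j$. Since gradients are additive, we get
\[
V_j = I - (I-\gamma\nabla f)^j = \nabla\!\left(\tfrac12\norm{x}^2 - \tilde h_j\right),
\]
so $V_j$ is conservative and we may take $h_j = \tfrac12\norm{x}^2 - \tilde h_j$.

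For part (1), I would argue exactly as in the proof of \cref{prop:preserve_eig}: since $f \in \mathcal{C}^\infty(\R^n,\R)$, the map $I-\gamma\nabla f$ is smooth, and smoothness is preserved under composition, so $(I-\gamma\nabla f)^j \in \mathcal{C}^\infty(\R^n)$ and hence $V_j \in \mathcal{C}^\infty(\R^n)$. Because $\nabla h_j = V_j$ is smooth and $h_j$ is differentiable, $h_j$ must itself be smooth.

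For part (2), the key observation is that a critical point of $f$ is a fixed point of the local update map. If $\nabla f(y) = 0$, then $(I-\gamma\nabla f)(y) = y - \gamma\nabla f(y) = y$, so $y$ is a fixed point of $I-\gamma\nabla f$; a short induction on $j$ then gives $(I-\gamma\nabla f)^j(y) = y$ for all $j$. Therefore
\[
\nabla h_j(y) = V_j(y) = y - (I-\gamma\nabla f)^j(y) = 0.
\]

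I do not expect any substantial obstacle: the argument is essentially bookkeeping once the fixed-point identity is noticed, and the smoothness claim follows verbatim from the reasoning already used for \cref{prop:preserve_eig}. The only point requiring care is keeping the $j$-conservativity hypothesis straight, ensuring that $\tilde h_j$ (and hence $h_j$) is well-defined for each $1 \leq j \leq k$. This lemma then serves as the base layer for reasoning about the server-side surrogate loss, since it identifies the critical points of the $h_j$ with those of $f$.
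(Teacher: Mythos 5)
Your proposal is correct and follows essentially the same route as the paper: smoothness of $h_j$ from closure of $\mathcal{C}^\infty$ under composition, and the fixed-point identity $(I-\gamma\nabla f)(y)=y$ at critical points of $f$. Your explicit verification that $V_j$ is conservative via $V_j = \nabla\bigl(\tfrac12\norm{x}^2 - \tilde h_j\bigr)$ is a small addition the paper leaves implicit here (it appears later in the proof of \cref{cor:server_conservative}), and is a welcome bit of completeness.
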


\begin{proof}
For (1), $h_k$ is differentiable by assumption. Moreover, $\nabla h_k = V_k \in \mathcal{C}^\infty(\R^n)$, as smoothness is preserved under addition and composition. Hence, $h_k \in \mathcal{C}^\infty(\R^n, \R)$. For (2), note that since $\nabla f(y) = 0$, we have
\[(I-\gamma\nabla f)(y) = y -\gamma\nabla f(y) = y\]
This implies that $(I-\gamma \nabla f)^k(y) = y$, so that $\nabla h_k(y) = y - (I - \gamma\nabla f)^k(y) = 0$.\end{proof}

In fact, many geometric properties important to optimization (such as convexity) are also inherited by $h_k$, provided that $\gamma$ is not too large, as in the following.

\begin{lemma}\label{lemma:preserve_properties}
Suppose $f \in \mathcal{C}^\infty(\R^n, \R)$ and $\nabla f$ is $\beta$-Lipschitz continuous. Suppose that for some $\gamma \in \R_{> 0}$, $I-\gamma\nabla f$ is $j$-conservative for $1 \leq j \leq k$, with $\nabla h_k = I - (I-\gamma\nabla f)^k$. Then:
\begin{enumerate}
    \item If $f$ is $\alpha$-strongly convex and $\gamma \leq 2(\alpha+\beta)^{-1}$ then $h_k$ is $(1-\lambda^k)$-strongly convex and $\nabla h_k$ is $(1+\lambda^k)$-Lipschitz continuous for $\lambda = 1-\gamma\alpha$.
    \item If $f$ is convex and $\gamma \leq 2\beta^{-1}$ then $h_k$ is convex and $\nabla h_k$ is $2$-Lipschitz continuous. If $\gamma \leq \beta^{-1}$, then $\nabla h_k$ is $1$-Lipschitz continuous.
    \item If $f$ is strictly convex and $\gamma < 2\beta^{-1}$ then $h_k$ is strictly convex.
    \item If $f$ is $\delta$-weakly convex for $\delta \leq \beta$ and $\gamma \leq 2\beta^{-1}$, then $h_k$ is $(\lambda^k - 1)$-weakly convex and $\nabla h_k$ is $(1+\lambda^k)$-Lipschitz continuous for $\lambda = 1+\gamma\delta$.
\end{enumerate}
\end{lemma}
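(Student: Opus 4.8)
The plan is to reduce every claim to a single spectral bound on the Jacobian of $\nabla h_j$, followed by a translation by the identity. Write $W := I-\gamma\nabla f$, so that $\nabla h_j = I - W^j$ and hence
\[
J(\nabla h_j)(x) = I - J(W^j)(x).
\]
Because $(I-\gamma\nabla f)$ is assumed $j$-conservative, $W^j$ is a gradient field and $J(W^j)(x)$ is symmetric at every $x$; consequently $J(\nabla h_j)(x)$ is symmetric, and reading off the strong-convexity, weak-convexity, and Lipschitz constants of $h_j$ amounts to locating the eigenvalues of $J(W^j)(x)$ inside an interval and shifting that interval by $1$. Smoothness of $h_j$ and the existence of the potential are already provided by \cref{lemma:client_properties}, so only the spectral estimates remain.

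The engine is the same eigenvalue-product estimate used in \cref{prop:preserve_eig}. By the chain rule, $J(W^j)(x)$ is the product of the $j$ symmetric matrices $J(W)(W^i(x))$ for $0 \le i \le j-1$, where $J(W)(y) = I - \gamma J(\nabla f)(y)$. If each factor has eigenvalues in $[-\lambda,\lambda]$, then each has operator norm at most $\lambda$; since the product $J(W^j)(x)$ is itself symmetric, its operator norm equals its spectral radius, and submultiplicativity gives $\norm{J(W^j)(x)} \le \lambda^j$. Thus the eigenvalues of $J(W^j)(x)$ lie in $[-\lambda^j,\lambda^j]$, and those of $J(\nabla h_j)(x) = I - J(W^j)(x)$ lie in $[1-\lambda^j,\,1+\lambda^j]$. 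It is precisely the symmetry of $W^j$, guaranteed by $j$-conservatism, that licenses the passage from norm bounds on the factors to eigenvalue bounds on the product (cf.\ the remark following \cref{prop:preserve_eig}); without it the product need not even have real spectrum. This yields exponent $j$, which the statement records as $\lambda^k$ following the convention of \cref{prop:preserve_eig}.

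It then remains to check, case by case, that the stated learning-rate bound forces the spectrum of $J(W)(y) = I - \gamma J(\nabla f)(y)$ into the correct symmetric interval, using that $\nabla f$ is $\beta$-Lipschitz to control $J(\nabla f)(y)$ from above. In case (1), $\alpha I \preceq J(\nabla f) \preceq \beta I$ places the eigenvalues of $J(W)$ in $[1-\gamma\beta,\,1-\gamma\alpha]$, and $\gamma \le 2(\alpha+\beta)^{-1}$ is exactly the inequality $1-\gamma\beta \ge -(1-\gamma\alpha)$; with $\lambda = 1-\gamma\alpha \in [0,1)$ the spectrum sits in $[-\lambda,\lambda]$, and the shift gives the $(1-\lambda^j)$-strong-convexity and $(1+\lambda^j)$-Lipschitz bounds. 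Case (4) is identical with $\lambda = 1+\gamma\delta \ge 1$, where $\gamma \le 2\beta^{-1}$ and $\delta \le \beta$ give $1-\gamma\beta \ge -(1+\gamma\delta)$; here $1-\lambda^j \le 0$, which reads as $(\lambda^j-1)$-weak convexity. Case (2) is the specialization $\alpha = 0$, $\lambda = 1$, so $\gamma \le 2\beta^{-1}$ gives spectrum of $J(\nabla h_j)$ in $[0,2]$ (convexity, $2$-Lipschitz); the sharper range $\gamma \le \beta^{-1}$ makes each factor $J(W)(y)$ positive semidefinite, so the symmetric product has spectrum in $[0,1]$ and $J(\nabla h_j)$ has spectrum in $[0,1]$ (the $1$-Lipschitz bound).

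The main obstacle, and the one case that escapes the uniform template, is strict convexity in case (3): there $0 \prec J(\nabla f)(x)$ holds only pointwise, with no uniform modulus $\alpha > 0$, so the interval argument degenerates at the lower end. The plan is to argue pointwise instead. For each fixed $x$, strict positivity of $J(\nabla f)(x)$ together with $\gamma < 2\beta^{-1}$ forces every eigenvalue of $J(W)(x)$ strictly into $(-1,1)$, so $\norm{J(W)(y)} < 1$ along the finite orbit $y = x, W(x), \dots, W^{j-1}(x)$, whence $\norm{J(W^j)(x)} < 1$ by submultiplicativity. As $J(W^j)(x)$ is symmetric, its largest eigenvalue is strictly below $1$, and therefore $J(\nabla h_j)(x) = I - J(W^j)(x) \succ 0$, i.e.\ $h_j$ is strictly convex. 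The delicate point here is that strictness must be propagated multiplicatively without a quantitative gap at each point, which is exactly why this case delivers only strict convexity of $h_j$ rather than a strong-convexity modulus.
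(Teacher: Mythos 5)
Your proof is correct and follows essentially the same route as the paper's: both reduce every case to the spectral estimate of \cref{prop:preserve_eig} on $J\bigl((I-\gamma\nabla f)^j\bigr)$ and then shift the resulting eigenvalue interval by the identity, with the learning-rate conditions checked by elementary interval arithmetic. You merely fill in the details the paper dismisses as ``analogous'' (including the pointwise treatment of strict convexity and the $j$-versus-$k$ exponent convention inherited from \cref{prop:preserve_eig}), using operator-norm submultiplicativity of the symmetric factors in place of the paper's informal ``products of eigenvalues'' phrasing.
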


\begin{proof}
This is a direct consequence of Proposition \ref{prop:preserve_eig}. For (1), by assumption we have $\alpha I \preceq J(\nabla f)(x) \preceq \beta I$ for all $x$, and therefore $-\lambda \preceq J(I-\gamma\nabla f)(x) \preceq \lambda I$ for all $x$ where $\lambda = 1-\gamma \alpha$. By Proposition \ref{prop:preserve_eig}, we have that for all $x$
\[
-\lambda^kI \preceq J((I-\gamma\nabla f)^k)(x) \preceq \lambda^kI
\]
and so
\[
0 \prec (1-\lambda^k)I \preceq J(\nabla h_k)(x) \preceq (1 + \lambda^k) I.
\]

The remaining parts of the lemma are proved in an analogous way using Proposition \ref{prop:preserve_eig} and basic algebraic manipulations.
\end{proof}

\section{Convergence Rates in Federated Learning}\label{sec:convergence}

We now use our machinery above to analyze the convergence of \fedavg in various settings. Recall that the server update at each round is given by $x_{t+1} = x_t - \eta V_s(x_t)$, where the ``server vector field'' $V_s$ is given by \eqref{eq:server_vector_field}. Throughout, we assume that each client $c$ performs $k$ steps of gradient descent with learning rate $\gamma > 0$ on their loss function $f_c$. As sketched in \cref{sec:federated_learning}, when the client losses are all $k$-conservative, we have the following link between \fedavg and gradient descent.

\begin{theorem}\label{thm:server_conservative}
Suppose that for all $c$, $f_c$ is a differentiable function such that $I-\gamma\nabla f_c$ is $k$-conservative. Then $V_s$ is a conservative vector field. In particular, there is some function $f_s$ such that $V_s = \nabla f_s$ and the \fedavg server update in \eqref{eq:fedavg_update} is equivalent to
\begin{equation}\label{eq:server_grad_descent}
    x_{t+1} = x_t - \eta\nabla f_s(x_t).
\end{equation}
\end{theorem}

\begin{proof}
By assumption, for $c = 1, \dots, C$, there is some function $h_c$ such that $\nabla h_c = (I-\gamma\nabla f_c)^k$. We can then define $q_c: \R^n \to \R$ by $q_c(x) := \frac{1}{2}\norm{x}^2 - h_c(x)$.
By construction,
\[\nabla q_c = I - \nabla h_c = I - (I-\gamma\nabla f)^k\]
implying that $V_s = C^{-1}\sum_{c=1}^C \nabla q_c$. Therefore, $V_s = \nabla f_s$ where $f_s = C^{-1}\sum_{c=1}^C q_c$.
\end{proof}

Note that in general, $f_s$ need not equal the average $f_{avg}$ of the client loss functions. If we have some understanding of $f_s$ (for example, whether $f_s$ is convex), we can immediately apply centralized optimization results to derive convergence results for \fedavg. To better understand the structure of $f_s$, we will use Lemma \ref{lemma:preserve_properties}. However, this requires $j$-conservatism for $j = 1, \dots, k$, as well as Lipschitz continuity. Thus, we make the following assumptions.

\begin{assumption}\label{assm:client_conservative}
For all $c$, $f_c$ is smooth and $I-\gamma\nabla f_c$ is $j$-conservative for $1 \leq j \leq k$.
\end{assumption}

\begin{assumption}\label{assm:beta_lipschitz}
For all $c$, $\nabla f_c$ is $\beta$-Lipschitz continuous.
\end{assumption}

Under Assumptions \ref{assm:client_conservative} and \ref{assm:beta_lipschitz}, Lemma \ref{lemma:preserve_properties} lifts geometric properties of the client loss functions $f_c$ to the function $f_s$.
Combining this with Theorem \ref{thm:server_conservative}, we can translate convergence rates for gradient descent to convergence rates for \fedavg in strongly convex and convex settings. We make no direct assumptions on client heterogeneity. Throughout, we let $f_s$ be a function such that $V_s = \nabla f_s$, as guaranteed by Theorem \ref{thm:server_conservative}.

\begin{theorem}\label{thm:fedavg_convergence_sc}
Suppose Assumptions \ref{assm:client_conservative} and \ref{assm:beta_lipschitz} hold, and that for all $c$, $f_c$ is $\alpha$-strongly convex. Then $f_s$ has a unique minimizer $x_s^*$, and if $\gamma = 2(\alpha+\beta)^{-1}$, $\eta = 1$, then the iterates $\{x_t\}_{t=0}^\infty$ of \fedavg satisfy
\begin{equation}\label{eq:fedavg_convergence_eq}
\norm{x_{t} - x_s^*} \leq \left(\dfrac{\beta - \alpha}{\beta + \alpha}\right)^{kt}\norm{x_0 - x_s^*}.
\end{equation}
\end{theorem}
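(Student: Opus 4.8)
The plan is to use \cref{cor:server_conservative} to reduce the \fedavg server dynamics to ordinary gradient descent on the surrogate $f_s$, and then apply the textbook contraction estimate for smooth, strongly convex objectives. Under \cref{assm:client_conservative}, \cref{cor:server_conservative} gives $V_s = \nabla f_s$ with $f_s$ smooth, so with $\eta = 1$ the server update \eqref{eq:server_update} is exactly $x_{t+1} = x_t - \nabla f_s(x_t)$. The whole theorem therefore reduces to analyzing one run of unit-step gradient descent on $f_s$, and the key is to control the spectrum of $J(\nabla f_s)$.

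First I would pin down the curvature of $f_s$. Write $\lambda := 1 - \gamma\alpha$; with $\gamma = 2(\alpha+\beta)^{-1}$ this is $\lambda = (\beta - \alpha)/(\beta + \alpha) \in [0,1)$. \cref{assm:beta_lipschitz} and $\alpha$-strong convexity give $\alpha I \preceq J(\nabla f_c)(x) \preceq \beta I$, and the point of this particular $\gamma$ is that it centers the resulting spectrum, yielding $-\lambda I \preceq J(I - \gamma\nabla f_c)(x) \preceq \lambda I$. Applying the symmetric two-sided bound of \cref{prop:preserve_eig} to each $j$-conservative client map gives $-\lambda^k I \preceq J((I-\gamma\nabla f_c)^k)(x) \preceq \lambda^k I$, so $J(\nabla h_c)(x) = I - J((I-\gamma\nabla f_c)^k)(x)$ has spectrum in $[1-\lambda^k,\, 1+\lambda^k]$. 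Averaging over $c$ preserves these Loewner bounds (cf.\ \cref{lemma:server_structure}), so $(1-\lambda^k)I \preceq J(\nabla f_s)(x) \preceq (1+\lambda^k)I$ for all $x$; that is, $f_s$ is $(1-\lambda^k)$-strongly convex with $(1+\lambda^k)$-Lipschitz gradient. Since $1 - \lambda^k > 0$, strong convexity makes $f_s$ coercive and strictly convex, so the minimizer $x_s^*$ exists and is unique, and $\nabla f_s(x_s^*) = 0$.

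Next I would carry out the contraction step. Set $G(x) := x - \nabla f_s(x)$, so that unit-step descent reads $x_{t+1} = G(x_t)$ and $G(x_s^*) = x_s^*$. The fundamental theorem of calculus gives $x_{t+1} - x_s^* = \left(\int_0^1 J(G)(x_s^* + s(x_t - x_s^*))\,ds\right)(x_t - x_s^*)$. Because $J(G)(x) = I - J(\nabla f_s)(x)$ is symmetric with spectrum contained in $[1-(1+\lambda^k),\, 1-(1-\lambda^k)] = [-\lambda^k, \lambda^k]$, its operator norm is at most $\lambda^k$ pointwise, and the bound survives averaging under the integral. Hence $\norm{x_{t+1} - x_s^*} \leq \lambda^k \norm{x_t - x_s^*}$, and iterating from $x_0$ yields $\norm{x_t - x_s^*} \leq \lambda^{kt}\norm{x_0 - x_s^*} = \left((\beta-\alpha)/(\beta+\alpha)\right)^{kt}\norm{x_0 - x_s^*}$, as claimed.

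Most of the work is front-loaded into the earlier machinery, so the remaining argument is short; the one point that requires care is the coupling between the step size and the conditioning of $f_s$. The choice $\eta = 1$ is not incidental: it equals $2/((1-\lambda^k)+(1+\lambda^k)) = 2(\mu_s + L_s)^{-1}$, the optimal constant step for strong-convexity parameter $\mu_s = 1-\lambda^k$ and smoothness $L_s = 1+\lambda^k$, and it is precisely this choice that symmetrizes the spectrum of $I - J(\nabla f_s)$ about $0$ and pushes the contraction factor down to the tight value $\lambda^k$; for a generic admissible step $\eta$ the factor would instead be $\max(\abs{1-\eta\mu_s}, \abs{1-\eta L_s})$, which is minimized exactly at $\eta = 1$. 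For the same reason, the symmetric bound of \cref{prop:preserve_eig} (rather than its one-sided counterpart) is the correct input, and it is available only because $\gamma = 2(\alpha+\beta)^{-1}$ symmetrizes the per-client spectrum; any other admissible $\gamma$ would enlarge the rate. I therefore expect this interplay, rather than any single computation, to be the main thing to get right.
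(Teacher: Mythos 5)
Your proposal is correct and follows essentially the same route as the paper: reduce the server update to unit-step gradient descent on $f_s$, use \cref{lemma:server_structure} (via \cref{prop:preserve_eig}) to get $(1-\lambda^k)I \preceq J(\nabla f_s) \preceq (1+\lambda^k)I$ with $\lambda = (\beta-\alpha)/(\beta+\alpha)$, and conclude with the standard contraction for smooth strongly convex objectives. The only difference is that the paper cites this last step to \cite[Theorem 3.10]{bubeck2015convex} while you prove it inline via the Jacobian of $x \mapsto x - \nabla f_s(x)$; your spectral computation and the resulting rate $\lambda^{kt}$ match the paper exactly.
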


\begin{proof}
This follows directly by combining Theorem \ref{thm:server_conservative} and Lemma \ref{lemma:preserve_properties} with well-known convergence rates for smooth, strongly convex functions (for example, see \cite[Theorem 2.1.15]{nesterov2003introductory}). See \cref{appendix:proof_sc} for more details.
\end{proof}

The convergence rate in \eqref{eq:fedavg_convergence_eq} was shown first by \citet[Theorem 2.11]{malinovskiy2020local}, whose result also applies to non-conservative gradient vector fields. The salient difference is that under under our assumptions, the limit point $x_s^*$ is actually the global minimizer of some strongly convex function. As we discuss below, this allows us to immediately derive analogous results for variants of \fedavg that apply other server optimizers.

When $k = 1$, \cref{thm:fedavg_convergence_sc} recovers the convergence rate of gradient descent on $f_{avg}$. Hence, \fedavg with $k > 1$ yields an exponential improvement in convergence (with respect to $k$), but may not converge to the minimizer $x^*$ of $f_{avg}$. To understand this discrepancy, one could analyze $\|x_s^* - x^*\|$. A tight upper bound was given for strongly convex quadratic functions by \citet[Lemma 5]{charles2021convergence}. A bound in the general strongly convex setting (not assuming $k$-conservatism) was given by \citet[Theorem 2.14]{malinovskiy2020local}, though whether this bound can be improved under \cref{assm:client_conservative} is an open question.

We now give a convergence rate for \fedavg in the convex setting.
\begin{theorem}\label{thm:fedavg_convergence_convex}
Suppose Assumptions \ref{assm:client_conservative} and \ref{assm:beta_lipschitz} hold, and that for all $c$, $f_c$ is convex with finite minimizer. Then $f_s$ has a finite minimizer $x_s^*$, and if $\gamma = \beta^{-1}$, $\eta = 1$, then the iterates $\{x_t\}_{t=0}^\infty$ of \fedavg satisfy
\begin{equation}\label{eq:fedavg_convergence_convex}
f_s(x_t) - f_s(x_s^*) \leq \dfrac{1}{2t}\norm{x_0 - x_s^*}^2.
\end{equation}
\end{theorem}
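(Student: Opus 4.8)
The plan is to reduce the statement to the classical convergence guarantee for gradient descent on a smooth convex function, with all the nontrivial structural content supplied by the lemmas already established for the server vector field. First I would invoke \cref{cor:server_conservative} to produce a smooth scalar potential $f_s$ with $V_s = \nabla f_s$. Setting $\eta = 1$ in the server update \eqref{eq:server_update} then yields $x_{t+1} = x_t - \nabla f_s(x_t)$, which is exactly gradient descent on $f_s$ with unit step size. Thus the problem becomes one of identifying the right smoothness and convexity constants for $f_s$ and feeding them into a textbook rate.

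Next I would extract the relevant properties of $f_s$ from \cref{lemma:server_structure}. Each $f_c$ is convex with a finite minimizer, and we have chosen $\gamma = \beta^{-1}$. Part (3) of the lemma, in its $\gamma \leq \beta^{-1}$ branch, then gives that $f_s$ is convex with $\nabla f_s$ being $1$-Lipschitz continuous; part (4) gives that $f_s$ has a finite minimizer $x_s^*$. In particular the quantity $\norm{x_0 - x_s^*}$ appearing on the right-hand side of \eqref{eq:fedavg_convergence_convex} is finite and well defined.

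With these facts in hand, the final step is the standard analysis. Writing $L = 1$ for the Lipschitz constant of $\nabla f_s$, the server step size $\eta = 1$ coincides with $1/L$, the largest step for which the classical guarantee holds. The textbook bound for gradient descent on an $L$-smooth convex function run with step $1/L$ (see \cite[Theorem 3.3]{bubeck2015convex}) is $f_s(x_t) - f_s(x_s^*) \leq \tfrac{L}{2t}\norm{x_0 - x_s^*}^2$, which specializes to precisely \eqref{eq:fedavg_convergence_convex} once $L = 1$ is substituted.

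Most of this proof is bookkeeping, since the substance lives in the earlier lemmas; the one place demanding care is the matching of constants. I would emphasize that the hypothesis $\gamma = \beta^{-1}$, rather than the weaker $\gamma \leq 2\beta^{-1}$, is exactly what upgrades the Lipschitz constant of $\nabla f_s$ from $2$ to $1$ in \cref{lemma:server_structure}. This alignment is essential: the unit step $\eta = 1$ meets the stability threshold $\eta \leq 1/L$ only when $L = 1$, and with $L = 2$ a unit step would overshoot $1/L$ so that the clean $\tfrac{1}{2t}$ rate would not follow directly. Verifying that the chosen $\gamma$ produces $L = 1$, so that $\eta = 1/L$ holds with equality, is therefore the crux of the argument.
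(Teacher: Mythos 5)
Your proposal is correct and follows essentially the same route as the paper: the paper's proof is exactly the combination of \cref{lemma:server_structure} (for convexity, the $1$-Lipschitz gradient under $\gamma \le \beta^{-1}$, and the finite minimizer) with the classical smooth-convex gradient descent rate of \cite[Theorem 3.3]{bubeck2015convex}. Your additional emphasis on why $\gamma = \beta^{-1}$, rather than $\gamma \le 2\beta^{-1}$, is needed to make $\eta = 1$ match the $1/L$ step size is a correct and worthwhile clarification of the constant-matching that the paper leaves implicit.
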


\begin{proof}
This follows by combining Theorem \ref{thm:server_conservative} and Lemma \ref{lemma:preserve_properties} with well-known convergence rates for smooth, convex functions (for example, see \cite[Theorem 3.3]{bubeck2015convex}). See \cref{appendix:proof_convex} for more details.
\end{proof}

To the best of our knowledge, Theorem \ref{thm:fedavg_convergence_convex} is the first result showing that \fedavg exhibits convergent behavior on a class of (non-strongly) convex functions, even with fixed learning rates and $k > 1$. Unlike Theorem \ref{thm:fedavg_convergence_sc}, it is not clear that the convergence in \eqref{eq:fedavg_convergence_convex} is ``faster'' (in some sense) than the convergence of gradient descent on $f_{avg}$. Such analysis is an open and important problem.

\subsection{Extensions to Other Methods}

Above, we showed that our results from \cref{sec:optimization} allow us to transfer classical convergence rates for gradient descent to convergence rates for \fedavg (under $k$-conservatism). However, much of our machinery (in particular, our lifting results, such as \cref{lemma:preserve_properties}) is not specific to the server update \eqref{eq:fedavg_update_alt} of \fedavg. In fact, our machinery will allow us to analyze any federated learning algorithm where the server update in \eqref{eq:fedavg_update_alt} is replaced with some other first-order optimization method (as proposed by \citet{reddi2021adaptive}).

In more detail, let us treat $V_s(x_t)$ as an estimate of the gradient of the loss function $f_{avg}$. If we apply gradient descent, we arrive at the update step in \eqref{eq:fedavg_update_alt}. However, we could use any first-order ``server optimization'' method \serveropt. This allows us to generalize the server update \eqref{eq:fedavg_update_alt} via the following discrete-time dynamical system:
\begin{equation}\label{eq:fedopt_update}
    x_{t+1} = \serveropt(V_s(x_t)).
\end{equation}
For example, \serveropt could be gradient descent with momentum or an adaptive method such as Adagrad~\citep{duchi2011adaptive,mcmahan2010adaptive}. These two choices of \serveropt lead to \fedavgm~\citep{hsu2019measuring} and \fedadagrad~\citep{reddi2021adaptive} respectively, and can lead to improved empirical convergence.

Under \cref{assm:client_conservative}, \eqref{eq:fedopt_update} becomes $x_{t+1} = \serveropt(\nabla f_s(x_t))$, which is equivalent to applying the first-order optimizer \serveropt to the surrogate loss $f_s$. Thus, convergence rates for \serveropt can be translated into converge rates for \eqref{eq:fedopt_update}. Notably, this implies that in some settings, there are algorithms which converge to the same point as \fedavg, but faster.

For example, in the same settings as \cref{thm:fedavg_convergence_sc}, we can improve convergence by using gradient descent with heavy-ball momentum. By an almost identical proof to \cref{thm:fedavg_convergence_sc}, we have the following result.
\begin{theorem}\label{thm:heavy_ball_convergence_sc}
Let $\{x_t\}_{t=0}^\infty$ be the iterates of \eqref{eq:fedopt_update} where \serveropt is gradient descent with heavy-ball momentum. Under the same setting as \cref{thm:fedavg_convergence_sc}, for some choice of parameters of \serveropt, the sequence $\{x_t\}_{t=0}^\infty$ satisfies
\begin{equation}\label{eq:heavy_ball_convergence_eq}
\norm{x_{t} - x_s^*} \leq \left(\dfrac{\sqrt{\kappa} - 1}{\sqrt{\kappa} + 1}\right)^t\norm{x_0 - x_s^*}
\end{equation}
where $\kappa = (1+\lambda^k)/(1-\lambda^k)$ and $\lambda = (\beta-\alpha)/(\beta+\alpha)$.
\end{theorem}

\begin{proof}
The proof is the same as for \cref{thm:fedavg_convergence_sc}, but we apply convergence rates for gradient descent with heavy-ball momentum instead (see \citep{polyak1964some}). See \cref{appendix:proof_sc} for more details.
\end{proof}

One can verify that the convergence rate in \eqref{eq:heavy_ball_convergence_eq} is faster than \eqref{eq:fedavg_convergence_eq}. We stress that while the same kind of result can be derived for any number of centralized optimization algorithms, the key point is that our analytic framework allows us to leverage existing knowledge of centralized optimization methods in the context of federated learning. In particular, this can enable more informed, theoretically grounded decisions about which choice of optimizer and hyperparameters to use in \eqref{eq:fedopt_update}.

\section{Summary and Open Problems}

Our goal above was to plainly introduce the notion of $k$-conservative vector fields and illustrate their importance to optimization and federated learning. Notably, when the clients' gradient vector fields are $k$-conservative, \fedavg is equivalent to gradient descent on some surrogate loss function (\cref{sec:federated_learning} and \cref{thm:server_conservative}). By contrast, in the absence of $k$-conservatism, \fedavg can exhibit non-convergent, circular behavior (\cref{sec:non_conservative_dynamics}). We gave some notable examples of $k$-conservative vector fields (\cref{sec:examples}) and constructed the space of smooth $k$-conservative vector fields (\cref{sec:smooth_vector_fields}). This viewpoint allowed us to show that important function properties (including convexity) lift from the client loss functions to the surrogate loss (\cref{sec:optimization}). This in turn let us leverage existing optimization theory to easily understand the convergence of federated optimization methods (\cref{sec:convergence}).

We believe that this work asks more questions than it solves, both within the realm of federated learning and without. We provide a non-comprehensive list of relevant open problems below. These vary from more abstract (for example, understanding the structure of $W^k(\R^n)\cap\mathcal{P}_d(\R^n)$ as a projective variety) to more concrete (for example, using these insights to design improved federated learning algorithms). They also span topics in geometry, dynamical systems, and optimization. While we attempt to group these open problems according to the viewpoint in which they are most natural, these viewpoints are mutually reinforcing rather than mutually exclusive, and most of these questions can be viewed from more than one perspective.

\subsection{The Geometric Perspective}

As we discuss in \cref{sec:smooth_vector_fields}, much of this work can be phrased in terms of natural questions about the geometric structure of $\mathcal{W}^\infty(\R^n)$ . Its subspaces $\mathcal{W}^k(\R^n)$ yield non-trivial algebraic-geometric objects when restricted to homogeneous polynomials, but we have only scratched the surface of understanding these spaces. Such analysis may yield practical results; deriving membership criteria for $\mathcal{W}^\infty(\R^n)$ may allow federated learning practitioners to better select and design loss functions for optimization.

The discussion above is fundamentally tied to the Euclidean setting. However, many of the questions we pose may also be applied to more general geometric objects, especially smooth Riemannian manifolds. Rather than analyzing the conservatism of vector fields, we could instead analyze the exactness of differential 1-forms. However, even defining the correct analog of being $k$-conservative in this setting is non-trivial, as we cannot arbitrarily compose sections of the cotangent bundle of a manifold. 

Finally, we focused primarily on infinitely smooth functions defined on the entirety of $\R^n$. We can, of course, define non-smooth vector fields, or vector fields whose domain is a subset of $\R^n$. Indeed, this is motivated by practice, as many functions of interest to optimization and machine learning are non-smooth or not defined globally. In such cases, analyzing whether such vector fields are in fact the gradient field of some loss function becomes more challenging, as the Poincar\'e lemma need not apply.

\subsection{The Optimization Perspective}

While $k$-conservatism of client loss functions implies that \fedavg converges in many settings, it is not strictly necessary~\citep{malinovskiy2020local}. Better characterizations of when \fedavg exhibits convergent behavior (or fails to do so) is an important open problem. Similarly, we have only scratched the surface on how the dynamics of the client loss functions lift to the server dynamics. Although many convexity-adjacent properties lift (Lemma \ref{lemma:preserve_properties}), other natural properties (including being bounded below) do not lift. What about properties such as the Polyak-\L{}ojasiewicz condition~\citep{karimi2020}? What can we say about the server loss $f_s$ in relation to the client loss functions $f_c$?

Another related open problem is understanding the empirical effectiveness of methods such as \fedavg, both in terms of convergence rates and utility of the point converged to. As discussed in \citet{wang2021field}, theoretical convergence rates of federated learning methods often do not improve upon centralized rates for algorithms such as stochastic gradient descent. While Theorem \ref{thm:fedavg_convergence_sc} shows that \fedavg accelerates convergence to a non-optimal point, it is unclear whether Theorem \ref{thm:fedavg_convergence_convex} implies a similar acceleration. Notably, very little can currently be said about the properties of this non-optimal point outside of limited settings. Is there some sense in which the limit point $x_s^*$ is a useful point of convergence, either for learning a global model, or as a starting point for personalization? More generally, are there underlying trade-offs between the accuracy and the convergence of federated optimization methods? If so, how do we effectively balance the two in practical settings?

\subsection{The Dynamical Systems Perspective}

In \fedavg, the induced server vector field in \eqref{eq:fedavg_update} need not be conservative. Regardless, it defines a discrete-time dynamical system, a system whose behavior is not entirely determined by its representability as gradient descent on some surrogate loss function. More general methods of characterizing the dynamics of this system, such as determining whether it converges, and if so to what point, would greatly benefit the analysis and design of federated learning algorithms.

This dynamical system has a number of similarities to dynamical systems defined by multi-agent interactions, as the client updates may conflict with one another. Such systems (for example, dynamical systems arising from multi-player differentiable games, such as when training generative adversarial networks~\citep{goodfellow2014generative}) may have non-zero curl, or even support compact integral curves (ruling out the existence of a Lyapunov function). Can we use insights from training multi-agent systems to create better federated learning methods? Can we classify what kinds of multi-player games arise from federated learning algorithms?

\bibliography{references}

\appendix

\section{In-Depth Examples}\label{appendix:examples}

In this section, we give some in-depth examples regarding the $k$-conservatism of vector fields in $\mathcal{C}^\infty(\R^2)$. Note that for $V \in \mathcal{C}^\infty(\R^2)$, $D_k(V)$, as defined in \eqref{eq:D_k_map}, is a $2\times 2$ anti-symmetric matrix over $\mathcal{C}^\infty(\R^2, \R)$. Thus, when setting $D_k(V) = 0$, it suffices to consider a single off-diagonal entry. In a slight abuse of notation, in this section we will identify $D_k(V)$ with either off-diagonal entry of $D_k(V)$. Note that this is well-defined up to a factor of $-1$.

\subsection{Linear Vector Fields}\label{example:linear_revisited}

Recall that $\mathcal{P}_1(\R^n)$ denotes the set of linear vector fields. Let $V \in \mathcal{P}_1(\R^n)$ be of the form $V(x, y) = (ax + by, cx + dy)$. Then we have the following equations (where we consider only the non-zero off-diagonal entries of $D_k$):
\begin{align*}
    D_1(V) &= b - c \\
    D_2(V) &= (b - c)(a + d) \\
    D_3(V) &= (b - c)(a^{2} + a d + b c + d^{2}) \\
    D_4(V) &= (b - c)(a + d)(a^{2} + 2 b c + d^{2}).
\end{align*}
If $V$ is conservative, then $b = c$ and these equations all vanish. Comparing $D_1$, $D_2$, and $D_3$, we see that $2$-conservative vector fields need not be conservative nor $3$-conservative. For example, if we take
\[
V(x) = \begin{pmatrix}1 & 2\\ 1 & -1\end{pmatrix}x
\]
then $V$ is 2-conservative and $4$-conservative, but not conservative or $3$-conservative.

Note that if $\mathcal{L}(\R^n)$ is the set of symmetric linear vector fields (and hence, the set of $\infty$-conservative linear vector fields), then $\mathcal{L}(\R^n)$ is closed under self-composition, but not closed under arbitrary composition. To see this, consider the symmetric linear vector fields
\[
V_1(x) = \begin{pmatrix}0 & 1\\ 1 & 0\end{pmatrix}x,~~~V_2(x) = \begin{pmatrix}1 & 0\\0 & -1\end{pmatrix}x.
\]
Then $V_1, V_2 \in \mathcal{L}(\R^n)$. However, $V_1\circ V_2 \not\in \mathcal{L}(\R^n)$ since
\[
V_1(V_2(x)) = \begin{pmatrix}0 & -1\\ 1 & 0\end{pmatrix}x
\]
which is a non-symmetric linear map. In particular, this implies that $\mathcal{W}^\infty(\R^n)$ is not closed under arbitrary composition.

Notably, $\mathcal{P}_1(\R^n)$ contains vector fields that are $j$-conservative but not $k$-conservative for $k < j$. For $j \geq 2$, consider the vector field given by $V_j(x) = A_jx$ where
\[
A_j(x) = \begin{pmatrix}\cos(\theta_j) & \sin(\theta_j) \\ -\sin(\theta_j) & \cos(\theta_j)\end{pmatrix},~~~ \theta_j = \frac{\pi}{j}.
\]
This is the vector field that rotates vectors by an angle of $\pi/j$. Since $V_j^k$ is conservative precisely when $V_j^k$ is symmetric, $V_j^k$ is conservative if and only if $\sin(k\theta_j) = 0$. Thus, $V_j$ is $k$-conservative if and only if $j$ divides $k$.

\subsection{Gradient Vector Fields of Cubic Polynomials}\label{example:cubic_revisited}

Consider the vector space $\mathcal{P}_3(\R^2, \R)$ containing polynomials of the form
\[
f(x, y) = ax^3 + bx^2y + cxy^2 + dy^3
\]
for $a,b,c,d \in \R$. All such $f$ satisfy $D_1(\nabla f) = 0$ (as $\nabla f$ is conservative). By direct computation, taking only the off-diagonal entries of $D_k$, we get
\[
D_2(\nabla f) = g_1x^3 + g_2x^2y + g_3xy^2 + g_4y^3.
\]
for $g_1, g_2, g_3, g_4 \in \R[a, b, c, d]$ defined by
\begin{align*}
g_1 &= -4b(3 a c - b^{2} + 3 b d - c^{2}) \\
g_2 &= 4(3 a - 2 c)(3 a c - b^{2} + 3 b d - c^{2}) \\
g_3 &= 4(2 b - 3 d)(3 a c - b^{2} + 3 b d - c^{2}) \\
g_4 &= 4c(3 a c - b^{2} + 3 b d - c^{2}).
\end{align*}
One can then verify that these equations vanish simultaneously if and only if
\[
g(a,b,c,d) = 3 a c - b^{2} + 3 b d - c^{2} = 0.
\]
Thus, the set of $2$-conservative functions in $\nabla \mathcal{P}_3(\R^2, \R)$ is the hypersurface given by the zero locus of $g$. Since this zero locus is not closed under addition, the set of $2$-conservative vector fields in $\nabla\mathcal{P}_3(\R^2, \R)$ is not closed under addition either.

An analogous computation shows that the set of $3$-conservative function is given by the zero locus of 8 homogeneous polynomials of degree 7, each of which is divisible by $g$. Therefore, all $2$-conservative vector fields in $\nabla\mathcal{P}_3(\R^2, \R)$ are also $3$-conservative.

\section{Detailed Proofs}

\subsection{Proof of Theorems \ref{thm:fedavg_convergence_sc} and \ref{thm:heavy_ball_convergence_sc}}\label{appendix:proof_sc}

\begin{proof}
Fix $c \in \{1, \dots, C\}$. By Lemma \ref{lemma:client_properties}, there is some function $h_c \in \mathcal{C}^\infty(\R^n, \R)$ such that $\nabla h_c = I-(I-\gamma\nabla f_c)^k$. Let $\lambda = (\beta-\alpha)/(\beta + \alpha)$. By Assumptions \ref{assm:client_conservative} and \ref{assm:beta_lipschitz}, Lemma \ref{lemma:preserve_properties}, and our assumption on $\gamma$, we find that $h_c$ is $(1-\lambda^k)$-strongly convex and $\nabla h_c$ is $(1+\lambda^k)$-Lipschitz continuous.

Note that the server vector field $V_s$ in \eqref{eq:server_vector_field} is therefore given by $V_s = \nabla f_s$ where
\[
f_s(x) = \dfrac{1}{C}\sum_{c=1}^C h_c(x).
\]
By basic properties of strong convexity and Lipschitz-continuity, we find that $f_s$ is $(1-\lambda^k)$-strongly convex and $\nabla f_s$ is $(1+\lambda^k)$-Lipschitz continuous. In particular, it has a unique minimizer $x_s^*$.

For Theorem \ref{thm:fedavg_convergence_sc}, applying standard results on the convergence of gradient descent on smooth strongly convex functions (in particular, see \cite[Theorem 2.1.15]{nesterov2003introductory}), we find that gradient descent with learning rate $\eta = 1$ on $f_s$ produces iterates $\{x_t\}_{t=0}^\infty$ such that
\[
\|x_{t+1}-x_s^*\| \leq \left(\dfrac{\kappa-1}{\kappa+1}\right)^t\|x_o-x_s^*\|
\]
where $\kappa = (1+\lambda^k)/(1-\lambda^k)$. Some simple algebraic manipulation implies
\[
\dfrac{\kappa-1}{\kappa+1} = \left(\dfrac{\beta-\alpha}{\beta+\alpha}\right)^k
\]
proving \cref{thm:fedavg_convergence_sc}.

For Theorem \ref{thm:heavy_ball_convergence_sc}, we apply standard results on the convergence of gradient descent with heavy-ball momentum (see \cite{polyak1964some}). In particular, by setting the learning rate $\eta$ by
\[
\eta = \dfrac{4}{\left(\sqrt{1+\lambda^k} +\sqrt{1-\lambda^k}\right)^2}
\]
and the momentum parameter $m$ by
\[
m = \max\left\{\left|1-\sqrt{\eta(1-\lambda^k)}\right|, \left|1-\sqrt{\eta(1+\lambda^k)}\right|\right\}^2
\]
we obtain the desired convergence rate.
\end{proof}

\subsection{Proof of Theorem \ref{thm:fedavg_convergence_convex}}\label{appendix:proof_convex}

\begin{proof}
Fix $c \in \{1, \dots, C\}$. By Lemma \ref{lemma:client_properties}, there is some function $h_c \in \mathcal{C}^\infty(\R^n, \R)$ such that $\nabla h_c = I-(I-\gamma\nabla f_c)^k$. By Assumptions \ref{assm:client_conservative} and \ref{assm:beta_lipschitz}, Lemma \ref{lemma:preserve_properties}, and our assumption on $\gamma$, $h_c$ is convex and $1$-Lipschitz continuous. By \cref{lemma:client_properties} and our assumption that $f_c$ has a finite minimizer, $h_c$ has a finite minimizer as well.

Note that the server vector field $V_s$ in \eqref{eq:server_vector_field} is therefore given by $V_s = \nabla f_s$ where
\[
f_s(x) = \dfrac{1}{C}\sum_{c=1}^C h_c(x).
\]
By basic properties of convexity and Lipschitz-continuity, we find that $f_s$ is convex and $1$-Lipschitz continuous. Moreover, the average of convex functions with finite minimizers must also have a finite minimizer, so $f_s$ has some finite minimizer $x_s^*$. By applying standard results on the convergence of gradient descent on smooth convex functions (in particular, see \cite[Theorem 3.3]{bubeck2015convex}), we find that gradient descent with learning rate of $\eta = 1$ on $f_s$ produces iterates $\{x_t\}_{t=0}^\infty$ such that
\[
f_s(x_t)-f_s(x_s^*) \leq \dfrac{1}{2t}\|x_0-x_s^*\|.
\]
\end{proof}

\section{Closed Integral Curves in Federated Learning}\label{appendix:cyclic_fl}

In this appendix we present calculations that demonstrate the possibility of closed integral curves in federated learning with non-convex client losses. The existence of losses of higher regularity than those presented here (e.g. convex or satisfying the PL condition) whose server dynamics admit closed integral curve solutions is an interesting open question. We suspect that examples like this can be transferred to some higher regularity classes, but clearly not all. For example, \cite{charles2021convergence} demonstrate that such integral curves are impossible for quadratic functions (under minor assumptions on learning rates).

Our example dynamics take place in $\R^2$, and we focus on the case of $C = 2$ clients. For $c = 1, 2$ we define a family of functions by

\begin{equation}\label{eq:cyclic_loss_def}
f_c(x, y) := f_c^{(1)}(x, y) + f_c^{(2)}(x, y),
\end{equation}
where
\[
f_c^{(1)}(x, y) := \min\left(\frac{\alpha_c}{2}\left(y - y_c\right)^2 +\frac{\beta_c}{2}\left(x - x_c\right)^2, 1\right), 
\]
\[
f_c^{(2)}(x, y) := 
\min\left(\frac{\alpha_c}{2}\left(y + y_c\right)^2 + \frac{\beta_c}{2}\left(x + x_c\right)^2, 1\right).
\]

We will see that carefully selecting two functions from this family and performing full-gradient \fedavg on these clients will yield server dynamics with closed integral curves. First, note that for any $x_c \text { and } y_c$, $\alpha_c \text{ and } \beta_c$ can be chosen such that the domains of attraction of the terms $f_c^{(1)}$ and $f_c^{(2)}$ are non-overlapping. One can verify that setting $\gamma = 5, \delta = 0.05$, and letting $\alpha_1 = \delta, \beta_1 = \gamma, x_1 = y_1 = 1$, or $\alpha_2 = \gamma, \beta_2 = \delta, x_2 = -1, y_2=1$ satisfies this requirement. Let these choices define the functions $f_1$ and $f_2$.

Now, assume we perform \fedavg with fixed learning rate $\eta > 0$ for some sufficiently large number of local steps $k$. We assume these clients follow full gradient descent, and we choose $k$ large enough so that the clients following full-gradient descent on the losses $f_1$ and $f_2$ converge to a stationary point, independent of the starting point. This can be guaranteed in our setting by setting $k = O\left(\eta^{-1}\right)$, with (easily computable) constant depending on $\gamma$ and $\delta$.

Notice that by assuming clients ``run until convergence'', the form of the server vector field $V_s$ (defined in \eqref{eq:server_vector_field}) becomes quite simple. We define the following domains in the $xy$-plane:

\begin{equation}\label{eq:domain_def}
\begin{split}
\mathbf{I} = \{(x, y) : f_1^{(1)}(x, y) < 1 \}, \\
\mathbf{II} = \{(x, y) : f_1^{(2)}(x, y) < 1 \}, \\
\mathbf{III} = \{(x, y) : f_2^{(1)}(x, y) < 1 \}, \\
\mathbf{IV} = \{(x, y) : f_2^{(2)}(x, y) < 1 \}.
\end{split}
\end{equation}

It is straightforward (though tedious) to verify that our choices of $\gamma, \delta$ above ensure $\mathbf{I} \cap \mathbf{IV}, \mathbf{I} \cap \mathbf{III}, \mathbf{II} \cap \mathbf{III}, \text{ and } \mathbf{II} \cap \mathbf{IV}$ are all nonempty. With these regions defined, a straightforward computation shows that the server vector field $V_s$ is given by:

\begin{equation}\label{eq:cyclic_server_vf}
V_s(x, y) = \begin{cases}
\left(1-x, -y\right) &\quad(x, y) \in \mathbf{I} \cap \mathbf{IV} \\
\left(-x, 1-y\right) &\quad(x, y) \in \mathbf{I} \cap \mathbf{III} \\
\left(-1-x, -y\right) &\quad(x, y) \in \mathbf{II} \cap \mathbf{III} \\
\left(-x, -1-y\right) &\quad(x, y) \in \mathbf{II} \cap \mathbf{IV} \\
\left(1-x, 1-y\right) &\quad(x, y) \in \mathbf{I} \cap \left(\mathbf{III}^c \cup \mathbf{IV}^c\right) \\
\left(-1-x, 1-y\right) &\quad(x, y) \in \mathbf{III} \cap \left(\mathbf{I}^c \cup \mathbf{II}^c\right) \\
\left(-1-x, -1-y\right) &\quad(x, y) \in \mathbf{II} \cap \left(\mathbf{III}^c \cup \mathbf{IV}^c\right) \\
\left(1-x, -1-y\right) &\quad(x, y) \in \mathbf{IV} \cap \left(\mathbf{I}^c \cup \mathbf{II}^c\right) \\
0 &\quad\text{otherwise.} \\
\end{cases}
\end{equation}

We define a flow along this vector field in the usual manner, by the ODE
\[
\frac{d}{dt} \left(x(t), y(t)\right) = V_s(x, y).
\]
That the dynamics of \fedavg will admit closed integral curves in this setting can now be readily seen, either by inspecting \cref{fig:cyclic_server_vf_main} or explicitly following a closed trajectory. The dynamics of \fedavg (as in \eqref{eq:fedavg_update}) correspond to discretizing the ODE above with some step-size $\eta$. That is, \fedavg maps a point $(x_t, y_t)$ to $(x_{t+1}, y_{t+1}) := (x_t, y_t) + \eta V_s(x_t, y_t)$. Under this discretization, letting $(x_0, y_0) = (0, 1)$ and choosing $\eta = 1$ yields a closed trajectory of period 8. Further, the choice of discretization does not affect the nature of the closed curve, only its period, as is clear from \cref{fig:cyclic_server_vf_main}.

\end{document}